\newtheorem{theorem}{Theorem}[section]
\newtheorem{lemma}[theorem]{Lemma}
\newtheorem{corollary}[theorem]{Corollary}
\theoremstyle{definition}
\theoremstyle{remark}
\numberwithin{equation}{section}
\def\ba{{\mathbf a}}
\def\bb{{\mathbf b}}
\def\bc{{\mathbf c}}
\def\bd{{\mathbf d}}
\def\be{{\mathbf e}}
\def\bh{{\mathbf h}}
\def\bj{{\mathbf j}}
\def\bk{{\mathbf k}}
\def\bm{{\mathbf m}}
\def\br{{\mathbf r}}
\def\bt{{\mathbf t}}
\def\bu{{\mathbf u}}
\def\bv{{\mathbf v}}
\def\bw{{\mathbf w}}
\def\bx{{\mathbf x}}
\def\by{{\mathbf y}}
\def\bz{{\mathbf z}}
\def\calB{{\mathcal B}} 
\def\calC{{\mathcal C}}
\def\calF{{\mathcal F}}
\def\calG{{\mathcal G}}
\def\calI{{\mathcal I}}
\def\calJ{{\mathcal J}}
\def\calN{{\mathcal N}}
\def\calR{{\mathcal R}}
\def\scrJ{{\mathscr J}}
\def\scrS{{\mathscr S}}
\def\C{\mathbb C}
\def\N{\mathbb N}
\def\Q{\mathbb Q}
\def\R{\mathbb R}
\def\Z{\mathbb Z}
\def\frB{{\mathfrak B}}
\def\frG{{\mathfrak G}}
\def\frH{{\mathfrak H}}
\def\frJ{{\mathfrak J}}
\def\frk{{\mathfrak k}}
\def\frm{{\mathfrak m}}\def\frM{{\mathfrak M}}
\def\frS{{\mathfrak S}}
\def\alp{{\alpha}}  
\def\bet{{\beta}}
\def\gam{{\gamma}} \def\Gam{{\Gamma}} 
\def\del{{\delta}} \def\Del{{\Delta}}
\def\eps{\varepsilon}
\def\tet{{\vartheta}}
 \def\Lam{{\Lambda}}
\def\sig{{\sigma}} 
\def\Ups{{\Upsilon}} 
\def\ome{{\omega}} \def\Ome{{\Omega}}
\def\balp{{\boldsymbol \alpha}}
\def\bbet{{\boldsymbol \beta}}
\def\bgam{\boldsymbol \gamma} 
\def\bdel{\boldsymbol \delta} \def\bDel{\boldsymbol \Delta}
\def\bfeta{{\boldsymbol \eta}}
\def\btet{{\boldsymbol \vartheta}}
 \def\bLam{{\boldsymbol \Lambda}}
\def\bxi{{\boldsymbol \xi}}
\def\bsig{{\boldsymbol \sig}}
\def\d{{\partial}}
\def\l{\ell}
\def\le{\leqslant} \def\ge{\geqslant}
\def\d{{\,{\rm d}}}
\def\msum#1#2{\sum_{\substack{ {1 \le {#2} \le {#1}} \\ {({#1}, {#2})=1} }}}
\DeclareMathOperator{\card}{card}
\DeclareMathOperator{\vol}{vol}
\begin{document}
\title[Optimal mean value estimates]{Optimal mean value estimates beyond\\ Vinogradov's 
mean value theorem}
\author[Julia Brandes]{Julia Brandes}
\address{Mathematical Sciences, University of Gothenburg and Chalmers Institute of 
Technology, 412 96 G\"oteborg, Sweden}
\email{brjulia@chalmers.se}
\author[Trevor D. Wooley]{Trevor D. Wooley}
\address{Department of Mathematics, Purdue University, 150 N. University Street, West 
Lafayette, IN 47907-2067, USA}
\email{twooley@purdue.edu}
\subjclass[2010]{11L15, 11D45, 11L07, 11P55}
\keywords{Exponential sums, Hardy--Littlewood method}
\date{}
\begin{abstract} We establish improved mean value estimates associated with the number 
of integer solutions of certain systems of diagonal equations, in some instances attaining 
the sharpest conjectured conclusions. This is the first occasion on which bounds of this 
quality have been attained for Diophantine systems not of Vinogradov type. As a 
consequence of this progress, whenever $u \ge 3v$ we obtain the Hasse principle for 
systems consisting of $v$ cubic and $u$ quadratic diagonal equations in $6v+4u+1$ 
variables, thus attaining the convexity barrier for this problem.
\end{abstract}
\maketitle

\section{Introduction}\label{S1}
In recent years, our understanding of systems of diagonal equations and their associated 
mean values has advanced rapidly. Whilst only a few years ago, such mean values had 
been comprehensively understood only in the most basic cases, the resolution of the main 
conjecture associated with Vinogradov's mean value theorem by the second author 
\cite{Woo2016,Woo2017} and Bourgain, Demeter and Guth \cite{BDG2016} has 
transformed the landscape. It now seems feasible to address the challenge of establishing 
similarly strong results for a much wider class of cognate problems.\par

In this memoir, we make progress towards, and in certain cases attain, the convexity 
barrier for a family of mean values associated with systems of equations that fail to be 
translation-dilation invariant and thus lie outside the scope of the efficient congruencing 
and $\l^2$-decoupling methods developed by the second author 
\cite{Woo2016, Woo2017} and Bourgain, Demeter and Guth \cite{BDG2016}. The most 
accessible of our results addresses systems of cubic and quadratic diagonal equations. Let 
$\calN_{s,v,u}(X)$ denote the number of integral solutions $\bx \in [-X,X]^s$ of the 
system of equations
\begin{equation}\label{1.1}
    \begin{aligned}
   		c_{i,1}^{(3)} x_1^3 + \ldots + c_{i,s}^{(3)} x_s^3 &= 0 \qquad (1 \le i \le v) \\
   		c_{j,1}^{(2)} x_1^2 + \ldots + c_{j,s}^{(2)} x_s^2 &= 0 \qquad (1 \le j \le u),
    \end{aligned}
\end{equation}
consisting of $u$ quadratic and $v$ cubic equations of diagonal shape. Here and 
throughout we assume the coefficients $c_{i,j}^{(k)}$ of such systems to be integral. It 
is clear that the presence of coefficients in such systems necessitates some kind of 
non-singularity condition, lest the equations interact in some non-generic way. We refer to 
an $r \times s$ matrix $C$ as \emph{highly non-singular} if $s \ge r$ and any collection 
of $r$ distinct columns of $C$ forms a non-singular matrix.\par

Our first result shows that $\calN_{s,v,u}(X)$ satisfies the anticipated asymptotic formula 
for all sets of coefficients in general position, provided that $s \ge 6v+4u+1$ and 
$u \ge 3v$. This achieves the conjectured convexity barrier.
\begin{theorem}\label{T1.1}
    Suppose that $u \ge 3v$ and that $s \ge 6v + 4u + 1$. Assume further that the 
coefficient matrices 
    \begin{align*}
    	C^{(2)} = (c_{i,j}^{(2)})_{\substack{1 \le i \le u \\ 1 \le j \le s}} \quad \text{and} 
\quad C^{(3)} = (c_{i,j}^{(3)})_{\substack{1 \le i \le v \\ 1 \le j \le s}}
    \end{align*} 
    are highly non-singular. Then there exist constants $\calC \ge 0$ and $\delta>0$ such 
that
    \begin{align}\label{1.2}
        \calN_{s,v,u}(X) = \calC X^{s-3v-2u} + O(X^{s-3v-2u-\delta}).
    \end{align}
    Moreover, if the system \eqref{1.1} has non-singular real and $p$-adic solutions for all 
primes $p$, then $\calC > 0$. 
\end{theorem}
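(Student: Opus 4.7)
We attack the counting problem via the Hardy--Littlewood circle method. For each column index $j$ set
\[
f_j(\bbet,\bgam)=\sum_{|x|\le X}e\Bigl(\sum_{i=1}^{u}\bet_i c_{i,j}^{(2)}x^2+\sum_{i=1}^{v}\gam_i c_{i,j}^{(3)}x^3\Bigr),
\]
so that by orthogonality $\calN_{s,v,u}(X)=\int_{\T^{u+v}}\prod_{j=1}^{s} f_j\,d\bbet\,d\bgam$, writing $\T^{u+v}=[0,1)^{u+v}$. Dissect the torus into major arcs $\frM$ (a union of Hardy--Littlewood neighbourhoods of rationals with denominator at most $X^{\eta}$, $\eta$ small) and minor arcs $\frm$. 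On $\frM$, the standard approximation of each $f_j$ by a product of a complete exponential sum and a smooth oscillatory integral delivers the main term $\calC X^{s-3v-2u}$, with $\calC$ the product of a singular series and a singular integral. The high non-singularity of $C^{(2)}$ and $C^{(3)}$ secures convergence of the singular series, and its positivity under the assumed local solubility follows from the usual truncated local density argument.

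The substance of the theorem lies in the minor arc estimate
\[
\int_{\frm}\prod_{j=1}^{s}|f_j(\bbet,\bgam)|\,d\bbet\,d\bgam\ll X^{s-3v-2u-\delta}.
\]
Since $s\ge 6v+4u+1$, I would peel off one factor and bound it pointwise: choosing an index $j_0$ for which the cubic column $(c_{1,j_0}^{(3)},\ldots,c_{v,j_0}^{(3)})$ activates a cubic Weyl bound (such an index being available after a standard pruning of $\frm$, thanks to the high non-singularity of $C^{(3)}$), Weyl differencing yields $\sup_{\frm}|f_{j_0}|\ll X^{1-\sig}$ for some $\sig>0$. The remaining $s-1\ge 6v+4u$ factors are then controlled by the sharp mean value estimate of the paper, which at the convexity threshold asserts that
\[
\int_{\T^{u+v}}\prod_{j\neq j_0}|f_j(\bbet,\bgam)|\,d\bbet\,d\bgam\ll X^{3v+2u+\eps}
\]
for any highly non-singular coefficient data satisfying $u\ge 3v$. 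Assembling these two bounds produces the desired minor arc saving of $X^{\sig-2\eps}$.

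The principal obstacle, and the technical heart of the paper, is the sharp mean value estimate. Because the system \eqref{1.1} is not translation-dilation invariant, neither $\l^{2}$-decoupling nor efficient congruencing in their pure forms apply. The strategy must instead exploit the cushion of quadratic equations: the hypothesis $u\ge 3v$ is precisely what permits the cubic directions to be absorbed by auxiliary Vinogradov-type mean values in the quadratic variables, where the convexity barrier \emph{is} attainable. I expect the reduction to proceed by a conditioning-and-H\"older iteration, with the high non-singularity of the coefficient matrices used at each stage to perform invertible changes of variables and keep the auxiliary systems within the class to which the induction applies. Once this sharp mean value is in hand, the rest of the argument assembles into the routine circle-method template sketched above.
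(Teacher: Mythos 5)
Your circle-method scaffolding matches the paper's: express $\calN_{s,v,u}(X)$ as an integral over $[0,1)^{u+v}$, dissect into major and minor arcs, recover the main term from the major arcs, and on the minor arcs peel off a Weyl-type saving and close with a sharp mean value at the threshold $s = 2K+1$ where $K = 3v+2u$. These steps correspond to \S\S3--4 of the paper (Lemmas 3.1--3.3, 4.3, 4.4), and your description of the singular series and singular integral is accurate in outline. Two small imprecisions are worth flagging: the index $j_0$ at which the Weyl bound saves is $\balp$-dependent (the paper handles this in Lemma 3.2 and the decomposition $\frm\subseteq\frm^{(1)}\cup\cdots\cup\frm^{(u)}$), and before the mean value can be invoked one must first bound $s-1-2K$ surplus factors trivially and then apply H\"older to reduce to a square mean over $K$ columns --- your displayed inequality with exactly $s-1$ factors and bound $X^{K+\eps}$ is only correct at $s=2K+1$.

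The genuine gap is the mean value estimate itself, which you rightly identify as the heart of the matter but do not prove. Moreover, your guess at its proof --- a ``conditioning-and-H\"older iteration, with the high non-singularity ... used at each stage to perform invertible changes of variables and keep the auxiliary systems within the class to which the induction applies'' --- is not how the paper proceeds; that description reads like efficient congruencing, which the authors explicitly note is unavailable here. The paper's Theorem 1.2 (and Corollary 1.4, which is what Theorem 1.1 actually needs) is proved in \S2 by a one-shot argument with no iteration. The key move is to artificially restore the missing linear slice: after H\"older reduces to $u$ columns (Lemma 2.2), one translates each block of variables by a common shift $z_m$ and averages over $|z_m|\le X$; the binomial theorem converts this into an auxiliary mean value $H_{\bk,u}(X)$ featuring the kernel $K_l(\balp;X,H)$ of \eqref{2.1} (Lemma 2.3). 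A single application of H\"older then separates the columns into $v$ copies carrying full Vinogradov mean values $J_{k_m(k_m+1)/2,k_m}$ and $u-v$ copies carrying the mixed quadratic quantity $\oint|f_2 K_2|^2$; the latter is evaluated elementarily in Lemma 2.1 and is the decisive new input. The hypothesis $u\ge 2v$ enables this H\"older split, while $u\ge 3v$ in Theorem 1.1 is only needed to ensure that some admissible $u_0\ge 2v$ divides $\kap=3v$ (Corollary 1.3), not because the quadratic equations ``absorb'' cubic directions. So your account of why $u\ge 3v$ enters is not the actual mechanism, and the proof of the mean value remains open in your proposal.
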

In general, asymptotic formul{\ae} like the one supplied by \eqref{1.2} are expected to 
hold whenever the number of variables exceeds twice the total degree of the system. 
However, thus far the validity of such an asymptotic formula has been proved only in a 
few isolated instances. Arguably the first non-trivial case in which this convexity barrier 
was achieved occurs in work of Cook \cite{Coo1971, Coo1973} concerning pairs and 
triples of diagonal quadratic equations. Recent work of Br\"udern and the second author 
\cite{BW2007, BW2016} obtains asymptotic lower bounds at the convexity limit for 
systems of diagonal cubic forms. In the case of mixed systems of cubic and quadratic 
equations, work of the second author underlying \cite[Theorem~1.2]{Woo2015} achieves 
the convexity limit in the case $u=v=1$ with $s\ge 11$ relating to systems consisting of 
one cubic and one quadratic diagonal equation. Most recently, investigations of the first 
author joint with Parsell \cite[Theorem~1.4]{BP2017} establish an asymptotic formula 
tantamount to \eqref{1.2} for systems of $v$ cubic and $u$ quadratic diagonal equations, 
though under the more restrictive hypothesis that $s \ge \lfloor 20v/3 \rfloor + 4u + 1$, 
thus missing the convexity barrier whenever $v \ge 2$. In subsequent work \cite{B2017}, 
the first author proved that an asymptotic formula of the shape \eqref{1.2} holds when 
$v \ge 2u$ and $s \ge 6v + \lfloor 14u/3 \rfloor + 1$, which misses the convexity barrier 
when $u \ge 2$. Thus, Theorem~\ref{T1.1} provides the first instance where bounds of the expected quality have been achieved for systems of $v$ cubic and $u$ quadratic equations in settings where both $u$ and $v$ exceed $1$.\par

Theorem~\ref{T1.1} is in fact a special case of our more general Theorem~\ref{T1.5} below. Both of these results rest on our new estimates for certain mean values of Vinogradov type. In their most general form, such mean values encode the number of integral solutions of systems of the general shape
\begin{align}\label{1.3}
    c_{i,1}^{(l)} (x_1^l-y_1^l) + \ldots + c_{i,s}^{(l)} (x_s^l - y_s^l) = 0 \qquad (1 \le i \le r_l,\, 1 \le l \le k),
\end{align}
in which $r_1, \dots, r_k$ are non-negative integers and the coefficients $c_{i,j}^{(l)}$ 
are integers. When all of the coefficient matrices 
\[
	C^{(l)} = \bigl(c_{i,j}^{(l)}\bigr)_{\substack{ 1 \le i \le r_l\\1 \le j \le s}}
\]
are highly non-singular, then the main conjecture states that the number of integral solutions $\bx, \by \in [-X,X]^s$ of the system \eqref{1.3} should be at most of order $X^{s+\eps} + X^{2s - K}$, for any $\eps>0$, where $K=r_1+2r_2+\ldots+kr_k$ denotes the system's total degree. A corresponding lower bound, with $\eps=0$, is provided by an argument akin to that delivering \cite[equation (7.4)]{Vau}. Systems of the shape \eqref{1.3} have previously been studied by the first author together with Parsell \cite{BP2017}, where it was shown that the main conjecture for such systems holds when $r_l \ge r_{l+1}$ for all $1 \le l \le k-1$. In the latter circumstances, the system \eqref{1.3} can be viewed as a superposition of Vinogradov systems of various degrees (see Theorem~2.1 and Corollary~2.2 in that paper). In wider generality, bounds of the strength of those described in Theorems \ref{T1.1} and \ref{T1.5} were known hitherto only for systems of quadratic equations and systems of Vinogradov type, as well as superpositions of these two special classes of systems.\par

The goal of the work at hand is to enlarge the range of systems of type \eqref{1.3} for which the main conjecture is known to hold. When the coefficient matrices $C^{(l)}$ are highly non-singular for $2 \le l \le k$, we denote by 
$I_{s,k}^{v,u}(X) = I_{s,k}^{v,u}(X; C^{(2)}, \ldots, C^{(k)})$ the number of integral 
solutions $\bx, \by \in [-X,X]^s$ of the system \eqref{1.3}, where 
\[
	r_k = v , \qquad r_{k-1} = \ldots = r_2 = u, \qquad r_1=0. 
\] 
Write further 
\begin{align}\label{1.4}
		K={\textstyle\frac12} k(k-1)u + kv - u,
\end{align}	
so that $K$ denotes the total degree of the system. 

In order to describe our new results concerning the mean value $I_{s,k}^{v,u}(X)$, we need to consider certain auxiliary systems of equations. Let $l \ge 2$  be an integer and write $\sig = \frac12 l(l+1)$. Then, given a positive number $X$, we denote by $M^*_l(X)$ the number of integer tuples $\bx, \by \in [-X,X]^{\sig-2}$ and $\bz,\bh \in [-X,X]^2$ satisfying 
\begin{align}\label{1.5}
    \sum_{i=1}^{\sig-2}(x_i^j-y_i^j)+j(z_1^{j-1}h_1+z_2^{j-1}h_2)=0\qquad (1\le j\le l).
\end{align} 
The main conjecture for systems of the shape \eqref{1.5} claims that 
\begin{align}\label{1.6}
	M^*_l(X) \ll X^{\frac12 l(l+1) + \eps}.
\end{align}
Our first main result is as follows. 

\begin{theorem}\label{T1.2}
	Suppose that $k \ge 3$, $v \ge 1$ and $u \ge 2v$ are integers with $u|kv$, and assume \eqref{1.6} for $l=k-1$. Then for any $s \ge u$ and any $\eps > 0$ we have
	\begin{align*}
		I_{s,k}^{v,u}(X) \ll X^{\eps}(X^s + X^{2s-K}).
	\end{align*}
\end{theorem}

By combining the ideas of the proof of Theorem~\ref{T1.2} with those underlying \cite[Theorem~2.1]{BP2017}, we can extend our results to cover also superpositions of systems of equations of the kind considered in Theorem~\ref{T1.2}. Fix a collection of degrees $k_1>k_2>\ldots >k_n\ge 3$ with associated multiplicities $v_1,\ldots,v_n\in \N$. Moreover, fix a tuple $u_0, u_1, \ldots, u_n$ of non-negative integers with $u_i \ge v_i$ for $1 \le i \le n$, set $k=k_1$, and define $w_0=0$ and $w_i=u_1+\ldots+u_i$ for $1\le i\le n$. Now define the parameter $r_l$ by putting
\begin{align}\label{1.7}
	r_1=0, \qquad r_2=w_n+u_0, \qquad r_l = \begin{cases}
	w_n & \text{when $3 \le l < k_n$}, \\
	w_j & \text{when $k_{j+1} < l < k_j$}, \\
	w_{j-1}+v_j& \text{when $l = k_j$ for some $j$}.
 		\end{cases}
\end{align}
We denote by 
\begin{equation}\label{1.8}
	I_{s,\bk}^{\bv,\bu}(X)=I_{s,\bk}^{\bv,\bu}(X; C^{(2)}, \ldots, C^{(k)})
\end{equation}
the number of integer solutions $\bx, \by \in [-X,X]^s$ of the system \eqref{1.3} with $\br$ defined as in \eqref{1.7}. These systems can be viewed as superpositions of systems of the shape considered in Theorem~\ref{T1.2} with parameters $(k_j,v_j,u_j)$, together with $u_0$ additional quadratic equations. Here, the total degree is given by 
\begin{align}\label{1.9} 
	K = \sum_{l=2}^k l r_l = \sum_{j=1}^n K_j +2u_0,
\end{align}
where, in accordance with \eqref{1.4}, we write
\[
	K_j=\tfrac{1}{2}k_j(k_j-1)u_j+k_jv_j-u_j.
\]
In this notation, we have the following generalisation of Theorem~\ref{T1.2}. 

\begin{theorem}\label{T1.3}
	Let $u_0$ be a non-negative integer. Suppose that $k_1 > \ldots > k_n \ge 3$, and let 
$u_1, \ldots, u_n$ as well as $v_1, \ldots, v_n$ be natural numbers with $u_j \ge 2 v_j$ 
and $u_j | k_j v_j$ for $1 \le j \le n$. Also, assume \eqref{1.6} for all degrees $l=k_j-1$ 
with $1 \le j \le n$. Then for $s \ge r_2$ and any $\eps>0$ we have 
	\begin{align*}
		I_{s,\bk}^{\bv,\bu}(X) \ll X^{\eps}(X^s + X^{2s-K}).
	\end{align*}
\end{theorem}
We also have an alternative, unconditional formulation of this result, which is given in Theorem~\ref{T3.3} below.

\par To illustrate the strength of our results in Theorems \ref{T1.2} and \ref{T1.3}, we discuss in more detail some of the most relevant special cases. Among the systems of diagonal equations not of Vinogradov type, the most well-studied ones are systems of cubic equations and systems of cubic and quadratic equations, such as we considered in our motivating example in Theorem~\ref{T1.1}. Regarding such systems, it is immediate from work of the second author \cite[Theorem~1.1]{Woo2015} that for every $\eps >0$ one has $I_{5,3}^{1,1}(X) \ll X^{31/6+\eps}$, and this bound implies via \cite[Theorem~2.1]{BP2017} that $I_{3+2u,3}^{1,u}(X) \ll X^{3+2u+1/6+\eps}$ for all $u \ge 1$. Theorem~\ref{T1.3} now allows us to improve this result.

\begin{corollary}\label{C1.4}
	Suppose that $v \ge 1$ and $s \ge u \ge 3v$. Then for any $\eps>0$ we have
	\begin{align*}
	I_{s,3}^{v,u}(X) \ll X^\eps (X^s + X^{2s-3v-2u}).
	\end{align*}
\end{corollary}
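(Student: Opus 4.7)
The plan is to deduce Corollary \ref{C1.4} as a direct specialisation of Corollary \ref{C1.3}. In the notation of Theorem \ref{T1.2}, I would take $v = 1$ and $\bk = (3)$, so that the system \eqref{1.3} whose solutions are counted by $I_{s,\bk,u}(X)$ consists of a single cubic equation together with $u$ quadratic diagonal equations; this matches the counting function $I_{s,3,u}(X)$ appearing in the statement of the corollary.

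From the definition \eqref{1.5}, with $v = 1$ and $k_1 = 3$ one computes
\begin{align*}
\kap = \frac{3 \cdot 4}{2} - 3 = 3, \qquad K = \kap + 2u = 3 + 2u,
\end{align*}
so the exponent $2s - 3 - 2u$ in the claimed bound is precisely $2s - K$. It therefore suffices to verify the hypotheses of Corollary \ref{C1.3} with the threshold $u_0 = 3$. The inequality $u_0 \ge 2v$ amounts to $3 \ge 2$, while the divisibility condition $u_0 \mid \kap$ becomes $3 \mid 3$; both are immediate. Corollary \ref{C1.3} then delivers the bound
\begin{align*}
I_{s,\bk,u}(X) \ll X^\eps \bigl( X^s + X^{2s - K} \bigr)
\end{align*}
for every integer $u \ge 3$, every integer $s \ge u$, and every $\eps > 0$, which is exactly the statement of Corollary \ref{C1.4}.

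No genuine obstacle arises in the deduction itself, since all of the analytic substance has already been absorbed into Corollary \ref{C1.3} (and ultimately Theorem \ref{T1.2}). The only points requiring attention are the identification of the multi-index $\bk=(3)$ with the subscript $3$ appearing in $I_{s,3,u}$, the elementary computation of $\kap$ and $K$, and the verification of the mild divisibility hypothesis $u_0 \mid \kap$ for the choice $u_0=3$; once these bookkeeping matters are in place, the corollary follows at once.
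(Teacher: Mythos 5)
Your deduction is correct and coincides with the paper's own argument: both specialise $v=1$, $\bk=(3)$, compute $\kap=3$ and $K=3+2u$, verify $u_0=3$ satisfies $u_0\ge 2v$ and $u_0\mid\kap$, and invoke Corollary~\ref{C1.3}.
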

This follows from Theorem~\ref{T1.3} in combination with Lemma~\ref{L2.1} below. 
Corollary~\ref{C1.4} represents only the second occasion, after the second author's 
successful treatment of the cubic case of Vinogradov's mean value theorem 
\cite{Woo2016}, that the convexity barrier has been attained for a system of diagonal 
equations involving cubic equations. In particular, we now have the main conjecture for 
mean values that correspond to systems consisting of one cubic and three quadratic 
diagonal equations.~This is the main new input that enables us to prove 
Theorem~\ref{T1.1}.\par

Our results complement older ones that can be obtained by other means. On the one hand, it follows from Theorem~2.1 and Corollary~2.2 of the first author's work with Parsell \cite{BP2017} in combination with Vinogradov's mean value theorem \cite[Theorem~1.1]{BDG2016} that the conclusion of Theorem~\ref{T1.3} holds unconditionally in the range 
\begin{align*}
	s \ge \sum_{j=1}^n \left( v_j \frac{k_j (k_j+1)}{2} + (u_j-v_j) 
\frac{k_j (k_j-1)}{2}\right) + 2 u_0 = K + w_n.
\end{align*}
On the other hand, for small $s$ the second author's result  
\cite[Corollary~1.2]{Woo2017} can be combined with the arguments of 
\cite[Theorem~2.1]{BP2017} to establish the conclusion of Theorem~\ref{T1.3} 
unconditionally in the range 
\begin{align*}
	s &\le \sum_{j=1}^n \left(v_j \frac{k_j(k_j-1)}{2} + (u_j-v_j) 
\frac{(k_j-1)(k_j-2)}{2} \right)+2u_0\\
	& = K - \sum_{j=1}^n ((k_j-2)u_j + v_j ). 
\end{align*}

\par Mean value estimates like those of Theorems \ref{T1.2} and \ref{T1.3} have long been employed to establish asymptotic formul{\ae} for the number of solutions of simultaneous diagonal equations. For $\br$ as in \eqref{1.7} and highly non-singular coefficient matrices $C^{(l)}$ $(2 \le l \le k)$, denote by $N_{s, \bk}^{\bv, \bu}(X)$ the number of integral solutions of the system of equations
\begin{align}\label{1.10}
    c_{i, 1}^{(l)}x_1^l + \ldots + c_{i, s}^{(l)}x_s^l=0 \qquad (1\le i\le r_l, \, 2\le l\le k)
\end{align}
with $|x_j| \le X$ for $1 \le j\le s$. It is well known that, if $s$ is sufficiently large in terms of $\bk$, $\bv$ and $\bu$, there is an asymptotic formula of the shape
\begin{align}\label{1.11}
    N_{s, \bk}^{\bv, \bu}(X) = (\calC + o(1))X^{s-K},
\end{align}
where $\calC$ is a non-negative constant encoding the local solubility data for the system \eqref{1.10}. The relevant question is how large $s$ has to be for an asymptotic formula 
like that of \eqref{1.11} to hold. Theorem~1.1 of \cite{BP2017} provides a bound for $s$ 
that is somewhat unwieldy, but can likely be reduced to
\begin{align*}
    s \ge \sum_{j=1}^n \big(v_jk_j(k_j+1) +(u_j-v_j)k_j(k_j-1)\big) + 4u_0 +1= 
2K + 2w_n + 1
\end{align*}
by accounting for our revised treatment of the major arcs described in \S\S\ref{S5}--\ref{S6} below. On the other hand, unless fundamentally new methods become available that avoid the use of mean values, we cannot expect to be able to establish such asymptotic formul{\ae} when $s \le 2K$. Thanks to our new mean value estimates in Theorem~\ref{T1.3}, we are now able to make progress towards this theoretical barrier.

\begin{theorem}\label{T1.5}
	Let $u_0$ be a non-negative integer. Suppose that $k_1 > \ldots > k_n \ge 3$, and let 
$u_1, \ldots, u_n$ as well as $v_1, \ldots, v_n$ be natural numbers with $u_j \ge 2 v_j$ 
and $u_j | k_jv_j$ for $1 \le j \le n$. Also, assume \eqref{1.6} for all degrees $l=k_j-1$ 
with
 $1 \le j \le n$. Then for $s \ge 2K+1$ the asymptotic formula \eqref{1.11} holds with 
$\calC \ge 0$. If, furthermore, the system \eqref{1.10} has non-singular solutions in $\R$ 
as well as in the fields $\Q_p$ for all $p$, then the constant $\calC$ is positive.
\end{theorem}

Again, we refer to Theorem~\ref{T4.1} below for an unconditional version of this result. 
Moreover, we note that in Lemma~\ref{L2.1} below it is shown that the bound 
\eqref{1.6} holds for $l=2$, and thus Theorem~\ref{T1.1} can be deduced as a special 
case of Theorem~\ref{T1.5}, corresponding to the parameters $k = 3$ and $u_0 = 
u-3v$.\par

The proofs of our results rest on an idea that played a crucial role in the second author's 
work on pairs of quadratic and cubic diagonal equations \cite{Woo2015}, and which has 
been explored further in the authors' recent work on incomplete Vinogradov systems 
\cite{BW2017}. In these papers, the missing linear equation is artificially added in, which 
makes it possible to exploit the strong bounds on Vinogradov's mean value theorem. By 
taking advantage of the translation-dilation invariance of the newly completed Vinogradov 
systems, we then relate these systems to the auxiliary mean values $M^*_{l}(X)$ 
introduced above. Whilst our understanding of these auxiliary mean values remains 
unsatisfactory for general degree, the quantity $M^*_{2}(X)$ may be comprehensively 
understood in terms of quadratic Vinogradov systems. This observation plays a pivotal role 
in our argument, and it is the main reason why we attain the convexity barrier in Theorem 
\ref{T1.1} and Corollary \ref{C1.4}.\par

\noindent {\textbf{Notation.}}
Throughout, the letters $s$, $u$, $v$, and $k$, as well as the entries of the vectors 
$\bk$, $\bu$, $\bv$, and $\br$, will denote non-negative integers. The letter $\eps$ will 
be used to denote an arbitrary, but sufficiently small  positive number, and we adopt the 
convention that whenever it appears in a statement, we assert that the statement holds 
for all sufficiently small $\eps>0$. We take $X$ to be a large positive number which, just 
like the implicit constants in the notations of Landau and Vinogradov, is permitted to 
depend at most on $s$, $\bk$, $\bv$, $\bu$, the coefficient matrices $C^{(l)}$ 
$(2 \le l \le k)$, and $\eps$. We employ the non-standard notation that when 
$G:[0,1)^n \rightarrow \C$ is integrable for some $n \in \N$, then
\begin{align*}
    \oint G(\balp) \d \balp = \int_{[0,1)^n}G(\balp) \d \balp.
\end{align*}
Here and elsewhere, we use vector notation liberally in a manner that is easily discerned 
from the context. In particular, when $\bb$ denotes the integer tuple $(b_1, \dots, b_n)$, 
we write $(q, \bb) = \gcd(q, b_1, \dots, b_n)$.\par

\noindent {\textbf{Acknowledgements.}}
Both authors thank the Fields Institute in Toronto for excellent working conditions and 
support that made this work possible during the Thematic Program on Unlikely 
Intersections, Heights, and Efficient Congruencing. This work was further facilitated by 
subsequent visits of the first author to the University of Bristol, and of the second author 
to the University of Waterloo. The authors gratefully acknowledge the hospitality of both 
institutions. The work of both authors was supported by the National Science Foundation 
under Grant No.~DMS-1440140 while they were in residence at the Mathematical Sciences 
Research Institute in Berkeley, California, during the Spring 2017 semester. The first 
author's work was supported in part by Starting Grant 2017-05110 from 
Vetenskapsr{\aa}det. The second author's work was supported by a European Research 
Council Advanced Grant under the European Union's Horizon 2020 research and innovation 
programme via grant agreement No.~695223, and in the final stages by the National 
Science Foundation via Grant No.~DMS-1854398 and DMS-2001549.\par

The authors are also very grateful to Scott Parsell for pointing out an oversight in an 
earlier version of this paper, which necessitated a fundamental re-write of parts of the 
argument.

\section{Preliminaries and preparatory steps}\label{S2}
Our goal in this and the next section is the proof of Theorem~\ref{T1.3}. Before delving 
to the core of the argument, we pause to introduce some notation and establish a mean 
value estimate that will be of use in our subsequent discussion. For $2 \le l \le k$ we 
define the exponential sum $K_l(\balp; Z, H)$ by putting
\begin{align}\label{2.1}
    K_l(\balp; Z,H ) = \sum_{|h| \le H} \sum_{|z| \le Z} e(h \alp^{(1)} + 2hz \alp^{(2)} 
+ \ldots + lhz^{l-1}\alp^{(l)}),
\end{align}
and we write
\begin{align*}
    f_l(\balp; X) = \sum_{|x| \le X} e(\alp^{(1)} x + \alp^{(2)} x^2 + \ldots 
+ \alp^{(l)} x^l).
\end{align*}
Then, with the standard notation associated with Vinogradov's mean value theorem in 
mind, we put
\begin{align*}
    J_{s,l}(X) = \oint |f_l(\balp; X)|^{2s} \d \balp.
\end{align*}
We note that the main conjecture associated with Vinogradov's mean value theorem is 
now known to hold for all degrees. This is classical when $l=2$, it is a consequence of 
work of the second author \cite{Woo2016} for degree $l=3$, and for degrees exceeding 
three it follows from the work of Bourgain, Demeter and Guth, and of the second author 
(see \cite[Theorem~1.1]{BDG2016} and \cite[Corollary~1.3]{Woo2017}). Thus, for all 
$\sig > 0$ one has
\begin{align}\label{2.2}
    J_{\sig, l}(X) \ll X^{\eps}(X^{\sig} + X^{2\sig - l(l+1)/2}).
\end{align}
For future reference, we record the trivial inequality
\begin{align}\label{2.3}
    | a_1 \cdots a_n| \le |a_1|^n + \ldots + |a_n|^n,
\end{align}
which is valid for all $a_1, \ldots, a_n \in \C$.\par

We begin by bounding the mean value $M^*_{2}(X)$. 
\begin{lemma}\label{L2.1}
    Let $X$, $Z$ and $H$ be large real numbers. Then one has
    \begin{align}\label{2.4}
        \oint |f_2(\balp; X)K_2(\balp; Z,H)|^2 \d \balp \ll (XHZ)^\eps (HZ^2+XZ^2+XZH).
    \end{align}
\end{lemma}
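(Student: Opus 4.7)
My approach is to interpret the integral as a Diophantine counting problem and then exploit a single algebraic identity that forces a divisor relation on the variables. Multiplying out,
$$
f_2(\balp;X)\, K_2(\balp; X, H) = \sum_{\substack{|x|,\, |z| \le X \\ |h| \le H}} e\bigl(\alp^{(1)}(x+h) + \alp^{(2)}(x^2 + 2hz)\bigr),
$$
so by orthogonality the integral on the left of \eqref{2.4} equals $\sum_{(m,n) \in \Z^2} r(m,n)^2$, where $r(m,n)$ denotes the number of triples $(x,h,z)$ with $|x|, |z| \le X$, $|h| \le H$ satisfying $x + h = m$ and $x^2 + 2hz = n$. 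The pivotal identity is
$$
n - m^2 = (x^2 + 2hz) - (x+h)^2 = h(2z - 2x - h),
$$
which shows that $h$ divides $n - m^2$ for every triple counted by $r(m, n)$, and that once $h$ is fixed the value $x = m - h$ is determined, as is $z$ up to a parity condition. I plan to split the sum according to whether $n = m^2$ or $n \neq m^2$.

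For the off-diagonal contribution, with $n \neq m^2$, the divisor $h$ is necessarily nonzero, so $r(m, n) \le 2 d(|n - m^2|)$. Since $|n - m^2| \ll (X + H)^2$ throughout the support, the standard divisor bound yields $r(m, n) \ll (X + H)^{\eps}$. Combining this with the trivial identity $\sum_{m, n} r(m, n) = \card\{(x,h,z)\} \ll X^2 H$ gives
$$
\sum_{\substack{(m, n) \in \Z^2 \\ n \neq m^2}} r(m, n)^2 \le \Bigl(\max_{n \neq m^2} r(m, n)\Bigr) \sum_{m,n} r(m, n) \ll X^2 H\, (X+H)^{\eps}.
$$

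For the diagonal $n = m^2$, the factorisation $h(2z - 2x - h) = 0$ forces either $h = 0$ (with $x = m$, $|m| \le X$, and $z$ free, contributing $2X+1$ triples) or $h$ even and nonzero with $z = x + h/2$ (so that $(x, z)$ are determined by $h$, while $h$ ranges over an interval of length $O(\min(X, H))$). Hence $r(m, m^2) \ll X$ for $|m| \le X$ and $r(m, m^2) \ll \min(X, H)$ for $X < |m| \le X + H$, whence summing the squares over the $O(X + H)$ admissible values of $m$ delivers $\sum_m r(m, m^2)^2 \ll X^2 (X + H)$. Adding the two contributions produces the bound $\ll X^2(X + H) + X^2 H (X+H)^{\eps} \ll X^2(X + H)^{1 + \eps}$, as required. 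The main technical point is the diagonal case, since the divisor bound does not suppress it on its own; but the factorisation of $n - m^2$ partitions its contribution into two easily controlled sub-families, after which only routine bookkeeping remains.
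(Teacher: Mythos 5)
Your argument is correct. Both your proof and the paper's are elementary divisor-function estimates in the spirit of the classical quadratic case of Vinogradov's mean value theorem, a connection the paper itself remarks upon just after the lemma, but the decomposition is genuinely different. The paper works directly with the doubled Diophantine system and, after substituting $h_2 = h_1 - x_1 + x_2$ from the linear equation into the quadratic one, reaches the factorisation $(x_1 - x_2)(x_1 + x_2 - 2z_2) = 2h_1(z_1 - z_2)$, splitting according to whether $h_1(z_1 - z_2)$ vanishes: the non-vanishing case is handled by a divisor estimate giving $O(H^{1+\varepsilon}X^{2+\varepsilon})$ solutions, and the vanishing case by a direct count of $O(X^2(X+H))$. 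You instead pass to the representation function $r(m,n)$ via Parseval and exploit the \emph{one-sided} identity $n-m^2 = h(2z-2x-h)$ on a single tuple, splitting on whether $n=m^2$; your off-diagonal estimate combines the divisor bound with $\sum_{m,n} r(m,n) \ll X^2 H$, and your diagonal is an explicit count governed by $h(2z-2x-h)=0$. The two routes produce the same pair of contributions and the same total. Your Parseval framing makes the underlying $\ell^2$ structure transparent and would adapt naturally to higher even moments, while the paper's substitution is marginally more direct and avoids introducing $r(m,n)$; the degenerate sets ($h_1(z_1-z_2)=0$ for pairs, versus $n=m^2$ for single tuples) do not coincide, but each yields the required $O(X^2(X+H))$.
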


\begin{proof}
    Upon considering the underlying system of equations, we see that the mean value on 
the left hand side of \eqref{2.4} is given by the number of integer solutions of the system 
of equations
	\begin{equation}\label{2.5}
	\begin{aligned}
        x_1^2-x_2^2 &= 2(h_1 z_1 - h_2 z_2)\\
        x_1-x_2 &= h_1-h_2,
    \end{aligned}
	\end{equation}
    with $|x_i| \le X$, $|h_i| \le H$ and $|z_i|\le Z$ for $i=1,2$. The second of these 
equations permits the substitution $h_2=h_1-x_1+x_2$ into the first, whence
    \begin{align*}
        (x_1-x_2)(x_1+x_2-2z_2)=2h_1(z_1-z_2).
    \end{align*}
    Suppose first that $h_1(z_1-z_2)$ is non-zero. Then for each of the $O(HZ^2)$ 
possible choices for $h_1$, $z_1$ and $z_2$ fixing the latter integer in such a manner, an 
elementary divisor function estimate shows there to be $O((HZ)^\eps)$ possible choices 
for the integers $x_1-x_2$ and $x_1+x_2-2z_2$, and hence also for $x_1$ and $x_2$. 
These choices also fix $h_2=h_1-x_1+x_2$, so we see that there are 
$O(H^{1+\eps}Z^{2+\eps})$ solutions of this first type. Meanwhile, if $h_1(z_1-z_2)=0$, 
then $h_1=0$ or $z_1=z_2$, and at the same time either $x_1=x_2$ or $x_1=2z_2-x_2$. 
In any case, therefore, each of the $O(XZ)$ possible choices for $z_2$ and $x_2$ 
determine $x_1$ and either $h_1$ or $z_1$. Since there are $O(Z+H)$ possible choices 
left by this constraint for the latter, and $h_2$ is again fixed by these choices just as 
before, we find that there are $O(XZ(Z+H))$ solutions of this second type. The conclusion 
of the lemma follows by summing the contributions from both types of solutions.
\end{proof}

Upon taking $X=H=Z$ in Lemma~\ref{L2.1}, we conclude that 
$M_{2}^*(X) \ll X^{3+\eps}$, which establishes \eqref{1.6} for $l=2$. We remark also that the system \eqref{2.5} can be interpreted as being of Vinogradov shape of degree two by means of the substitution $h_i = u_i-v_i$ and $z_i = u_i+v_i$ for $i=1,2$. Viewed in this way, Lemma~\ref{L2.1} amounts to no more than a rephrasing of the classical elementary proof of the quadratic case in Vinogradov's mean value theorem. \par

We now initiate the proof of Theorem~\ref{T1.3}, assuming the hypotheses of its 
statement. For $l \ge 2$, let
\begin{align*}
    g_l(\balp; X)=\sum_{|x| \le X}e(\alp^{(2)}x^2+\alp^{(3)}x^3+\ldots +\alp^{(l)}x^l).
\end{align*}
Define $\balp^{(l)}=(\alp_i^{(l)})_{1\le i\le r_l}$ for $2\le l\le k$. When $1\le i\le r_2$, write $\balp_i = (\alp_i^{(l)})$, where $l$ runs over all values for which $r_l \ge i$. We then put
\begin{align}\label{2.6}
    \gam_j^{(l)} = \sum_{i=1}^{r_l} c_{i,j}^{(l)} \alp_i^{(l)} \qquad (1 \le j \le s).
\end{align}
Also, set $\bgam_{j} = (\gam_j^{(l)})_{2 \le l \le k}$ for $1 \le j \le s$ and $\bgam^{(l)} = (\gam_j^{(l)})_{1 \le j \le s}$ for $2\le l\le k$, and put $\bgam = (\bgam_1, \dots, \bgam_s) = (\bgam^{(2)}, \dots, \bgam^{(k)})^T$. Then by orthogonality we have
\begin{align*}
    I_{s,\bk}^{\bv,\bu}(X) = \oint \prod_{j=1}^s |g_k(\bgam_j;X)|^2 \d \balp.
\end{align*}

\par Set $t_0=2$, and for a set of positive integers $t_1, \ldots, t_n$ to be fixed later take 
\begin{equation}\label{2.7}
	s_0 = t_0 u_0 + t_1 u_1 + \ldots + t_n u_n.
\end{equation}
Thus, on recalling (\ref{1.7}), we see in particular that
\[
	s_0 \ge u_0+u_1+\ldots +u_n=r_2.
\] 
Further, let $\calI$ denote the set of all integral $r_2$-tuples 
\[
	\left(j_{m,1},\ldots ,j_{m,u_m}\right)_{0 \le m \le n}
\]
with pairwise distinct entries $j_{m,h} \in\{1, \ldots, s\}$, and put 
\begin{align}\label{2.8}
	\calG_{\bt, \bk}^{\bv, \bu}(X) = \max_{\bj \in \calI}\oint \prod_{m=0}^n\prod_{h=1}^{u_m} |g_k(\bgam_{j_{m,h}}; X)|^{2s_0/r_2} \d \balp.
\end{align}
We can bound $I_{s,\bk}^{\bv,\bu}(X)$ in terms of $\calG_{\bt, \bk}^{\bv, \bu}(X)$. In 
particular, this will allow us to concentrate on the case when $s=s_0$. 

\begin{lemma}\label{L2.2}
	For any fixed choice of the positive integers $t_1,\ldots ,t_n$, we have the bounds
	\begin{align*}
		I_{s,\bk}^{\bv,\bu}(X) \ll \begin{cases}(\calG_{\bt, \bk}^{\bv, \bu}(X))^{s/s_0} &\text{when $r_2\le s\le s_0$,}\\
		X^{2s-2s_0} \calG_{\bt, \bk}^{\bv, \bu}(X) & \text{when $s > s_0$.} \end{cases}	
	\end{align*}
\end{lemma}

\begin{proof}
	When $s>s_0$, the trivial bound $g_k(\bgam_j;X)=O(X)$ delivers the estimate
	\begin{align*}
		I_{s,\bk}^{\bv,\bu}(X) \ll X^{2s-2s_0}\oint \prod_{j=1}^{s_0} 
|g_k(\bgam_j;X)|^2 \d \balp,
	\end{align*}
	and the conclusion of the lemma follows in this case from \eqref{2.3}. Suppose now that $r_2 \le s \le s_0$. Then from \eqref{2.3} and an application of H\"older's inequality, we find that
	\begin{align*}
		I_{s,\bk}^{\bv,\bu}(X) &\ll \max_{\bj\in \calI} \oint \prod_{m=0}^n\prod_{h=1}^{u_m} |g_k(\bgam_{j_{m,h}}; X)|^{2s/r_2} \d\balp\\
		& \ll \biggl( \max_{\bj\in \calI} \oint \prod_{m=0}^n\prod_{h=1}^{u_m} |g_k(\bgam_{j_{m,h}}; X)|^{2s_0/r_2} \d \balp \biggr)^{s/s_0}.
	\end{align*}
	Thus the lemma is established in both cases. 
\end{proof}

Suppose that the maximum in \eqref{2.8} is assumed at the tuple $\bj \in \calI$, which we consider fixed for the remainder of the analysis. For $2\le l\le k$ and $1\le i\le r_l$, set $d_{i, w_{m-1}+h}^{(l)}=c_{i, j_{m,h}}^{(l)}$ when $1 \le h \le u_m$ and $1 \le m \le n$, and likewise $d_{i,w_n+h}^{(l)}=c_{i, j_{0,h}}^{(l)}$ when $1 \le h \le u_0$. We then have the coefficient matrices
\[
	D^{(l)} = \bigl(d_{i,j}^{(l)}\bigr)_{\substack{1 \le i \le r_l\\ 1 \le j \le r_2}}\quad (2\le l\le k).
\]
We define $\del_i^{(l)}$ via the relations $\bdel^{(l)} = (D^{(l)})^T \balp^{(l)}$ for $2 \le l \le k$, and put $\bdel_j = (\del_j^{(l)})_{2 \le l \le k}$ for $1 \le j \le r_2$. Here, we employ notational conventions analogous to those described in the sequel to 
\eqref{2.6}.\par

Write 
\begin{align*}
	G_{t_j, k_j}^{v_j, u_j}(X) = \oint \prod_{h=1}^{u_j} |g_{k_j}(\bdel_{w_{j-1}+h}; X)|^{2t_j} \d \balp \qquad (1 \le j \le n).
\end{align*}
Thus, in the case $n=1$ and $u_0=0$, we have $\calG_{\bt, k}^{v, u}(X) = G_{t, k}^{v, u}(X)$. 

\begin{lemma}\label{L2.3}
	One has
	\begin{align*}
		\calG_{\bt, \bk}^{\bv, \bu}(X) \ll X^{2u_0+\eps} \prod_{j=1}^n G_{t_j, k_j}^{v_j, u_j}(X).
	\end{align*}
\end{lemma}

\begin{proof}
	Recall \eqref{2.7} and \eqref{2.8}. For temporary notational convenience, we put $u_{n+1}=u_0$. Then, after possibly relabelling indices, we see from \eqref{2.3} that
	\begin{align}\label{2.9}
		\calG_{\bt, \bk}^{\bv, \bu}(X) \ll \oint \prod_{j=1}^{n+1}\prod_{h=1}^{u_j} |g_k(\bdel_{w_{j-1}+h}; X)|^{2t_j} \d \balp.
	\end{align}
	The desired conclusion now follows by essentially the same argument as in \cite[Theorem~2.1]{BP2017}. Recall that the coefficient matrices $C^{(l)}$ are highly non-singular. Consequently, the matrices $D^{(l)}$ underlying the mean value in \eqref{2.8} inherit that property. Upon considering the underlying Diophantine equations and applying elementary row operations, we may thus assume without loss of generality that the first $r_l \times r_l$ submatrix of each matrix $D^{(l)}$ is diagonal.\par
	
	Recall the definition of the parameter $r_l$ from \eqref{1.7}. Since $\bj \in \calI$ has $r_2$ entries, we see that the matrix $D^{(2)}$ is of square format and hence diagonal. Thus we have $\del_i^{(2)} = d_{i,i}^{(2)} \alp_i^{(2)}$ for $1 \le i \le w_n+u_0$. In particular, the entries of $\bdel_i$ with $1 \le i \le w_n$ are independent of all the variables $\alp_{w_n+i}^{(2)}$ with $1 \le i \le u_0$. We may therefore interpret $\balp$ as the ordered pair $(\balp_{n+1}^{\dagger}, \balp_{n+1}^*)$ with $\balp_{n+1}^{\dagger} = (\balp_i)_{1 \le i \le w_n}$ and $\balp_{n+1}^* = (\alp_{w_n+i}^{(2)})_{1 \le i \le u_0}$. In this notation we can write 
	\begin{align}\label{2.10}
		\oint \prod_{j=1}^{n+1}\prod_{h=1}^{u_j} |g_k(\bdel_{w_{j-1}+h}; X)|^{2t_j} \d \balp=\oint \calF_{n+1}(\balp_{n+1}^{\dagger})G_{n+1}(\balp_{n+1}^{\dagger}; X) \d \balp_{n+1}^{\dagger},
	\end{align}
	where 
	\begin{align*}
		\calF_{n+1}(\balp_{n+1}^{\dagger}) = \prod_{m=1}^n\prod_{i=1}^{u_m} |g_k(\bdel_{w_{m-1}+i}; X)|^{2t_m}
	\end{align*}
	and 
	\begin{align*}
		G_{n+1}(\balp_{n+1}^{\dagger}; X) = \oint \prod_{i=1}^{u_0} |g_k(\bdel_{w_n+i};X)|^4 \d \balp_{n+1}^*.
	\end{align*}
	The latter mean value counts integer solutions $\bx, \by \in [-X,X]^{2u_0}$ of the system 
	\begin{align*}
		x_{i1}^2 + x_{i2}^2 - y_{i1}^2 - y_{i2}^2 = 0 \qquad ( 1 \le i \le u_0),
	\end{align*}
	where each solution is counted with a unimodular weight depending on $\balp_{n+1}^{\dagger}$. It then follows from the triangle inequality and Hua's lemma that 
	\begin{align}\label{2.11}
		G_{n+1}(\balp_{n+1}^{\dagger};X) \le G_{n+1}(\boldsymbol 0; X)  \ll X^{2u_0+\eps}.
	\end{align}
	
	We now iterate this procedure for $j=n,n-1,\ldots, 1$. For each index $j$, we see from \eqref{1.7} that $r_l > w_{j-1}$ only when $l \le k_j$. Moreover, we have $r_l \ge w_j$ if $l \le k_j -1$, and $r_l = w_{j-1}+v_j$ if $l = k_j$. Since we had arranged for the first $r_l \times r_l$ submatrices of each $D^{(l)}$ to be diagonal, it follows that the entries of $\bdel_i$ with $1 \le i \le w_{j-1}$ are independent of all the variables $\alp_{w_{j-1}+i}^{(l)}$ with $1 \le i \le u_j$ and $ 2 \le l \le k_j-1$, and also of all $\alp_{w_{j-1}+i}^{(k_j)}$ with $1 \le i \le v_j$. Together, these latter groups of variables form the vectors $\balp_i$ with $w_{j-1}+ 1 \le i \le w_j$. Hence by a similar argument to that encountered before, we can write $\balp_{j+1}^{\dagger} = (\balp_j^{\dagger}, \balp_j^*)$, where $\balp_j^{\dagger} = (\balp_i)_{1 \le i \le w_{j-1}}$ and $\balp_j^* = (\balp_{w_{j-1}+i})_{1 \le i \le u_j}$, noting in particular that the vector $\balp_1^{\dagger}$ is empty. For $2 \le j \le n$ put 
	\begin{align*}
		\calF_{j}(\balp_{j}^{\dagger}) = \prod_{m=1}^{j-1}\prod_{i=1}^{u_m} |g_k(\bdel_{w_{m-1}+i}; X)|^{2t_m},
	\end{align*}
	and take $\calF_{1}(\balp_{1}^{\dagger})=1$. Also, let
	\begin{align*}
		G_j(\balp_{j}^{\dagger}; X) = \oint \prod_{i=1}^{u_j} |g_k(\bdel_{w_{j-1}+i};X)|^{2t_j} \d \balp_j^* \qquad (1 \le j \le n).
	\end{align*}	
	Note that this mean value counts integer solutions $|\bx|, |\by| \le X$ to the system 
	\begin{align*}
		\sum_{i=1}^{u_j} d_{w_{j-1}+i,m}^{(k_j)} \sum_{h=1}^{t_j}(x_{i,h}^{k_j} -y_{i,h}^{k_j} )&=0 \qquad (1 \le m \le v_j),\\
       \sum_{i=1}^{u_j}d_{w_{j-1}+i,m}^{(l)}\sum_{h=1}^{t_j}(x_{i,h}^l-y_{i,h}^l)&=0 \qquad ( \text{$1 \le m \le u_j$, $2 \le l \le k_j-1$}),
	\end{align*}
	where each solution is counted with a unimodular weight depending on $\balp_j^\dagger$. An application of the triangle inequality shows that $G_j(\balp_{j}^{\dagger}; X) \le G_j(\boldsymbol 0; X) = G_{t_j,k_j}^{v_j, u_j}(X)$. We thus deduce that for $1 \le j \le n$ we have 
	\begin{align*}
		\oint \calF_{j+1}(\balp_{j+1}^{\dagger}) \d \balp_{j+1}^{\dagger} &= \oint \calF_j(\balp_j^{\dagger}) G_j(\balp_{j}^{\dagger}; X) \d \balp_j^{\dagger} 
		\ll G_{t_j, k_j}^{v_j, u_j}(X)\oint \calF_j(\balp_j^{\dagger})\d \balp_j^{\dagger}, 
	\end{align*} 
	and upon iterating we find that 
	\begin{align*}
		\oint \calF_{n+1}(\balp_{n+1}^{\dagger}) \d \balp_{n+1}^{\dagger}  \ll \prod_{j=1}^n G_{t_j, k_j}^{v_j, u_j}(X).
	\end{align*} 	
	The conclusion of the lemma follows upon combining this bound with \eqref{2.9}, 
\eqref{2.10} and \eqref{2.11}.	
\end{proof}

\section{The underlying mean value}\label{S3}
From Lemma~\ref{L2.3} it is clear that the desired bound $\calG_{\bt,\bk}^{\bv,\bu}(X) \ll X^{K+\eps}$ will follow if we can show that $G_{t_j,k_j}^{v_j,u_j}(X) \ll X^{K_j+\eps}$ for $j=1,\ldots, n$. We thus proceed to establish the latter bound. In the discussion of Lemmata \ref{L3.1} and \ref{L3.2} that follows, it is expedient to drop all mention of the indices $j$ with $1\le j\le n$. Note also that in this situation, we have $r_k=v$ and $r_l=u$ for $2 \le l \le k-1$. We introduce variables $\balp^{(1)} \in [0,1)^u$ and define $D^{(1)}$ to be the $u \times u$ identity matrix. Set further $\bdel^{(1)} = \balp^{(1)}$, and extend our previous notational conventions surrounding the vector $\bdel$ so as to  incorporate $\bdel^{(1)}$ in the natural manner.\par

Next, we define
\begin{align}\label{3.1}
    H_{t,k}^{v, u}(X)=\oint \prod_{j=1}^u |f_k(\bdel_j; 2X)|^{2t}K_k(-\bdel_j;X,2tX)\d \balp.
\end{align}
We begin by establishing the bound contained in the following lemma.

\begin{lemma}\label{L3.1}
    One has $G_{t, k}^{v, u}(X) \ll X^{-u} H_{t,k}^{v, u}(X)$.
\end{lemma}

\begin{proof}
    Define $\ome_l$ to be 1 when $l=1$, and $0$ otherwise. We decompose the set $\{1,\ldots ,tu\}$ into the blocks $\calB_m=\{(m-1)t+1,\ldots ,mt\}$ for $1\le m\le u$. The mean value $G_{t, k}^{v, u}(X)$ counts the number of integral solutions of the system of equations
    \begin{align}\label{3.2}
        \sum_{m=1}^u d_{j,m}^{(l)} \xi_m^{(l)} = \ome_l h_j \qquad (1 \le j \le r_l, \, 1 \le l \le k),
    \end{align}
    where
    \begin{align*}
        \xi_m^{(l)}= \sum_{i\in \calB_m}(x_i^l-y_i^l) \qquad (1\le m\le u, \, 1 \le l \le k),
    \end{align*}
    with $-X \le x_i, y_i \le X$ for $1 \le i \le tu$ and $|h_j| \le 2tX$ for $1 \le j \le u$. Observe that in our current situation all coefficient matrices $D^{(l)}$ with $1 \le l \le k-1$ are of format $u \times u$. Just as in the proof of Lemma~\ref{L2.3}, we can therefore assume without loss of generality that the coefficients $d_{j,m}^{(l)}$ with $1 \le l \le k-1$ vanish except when $j=m$. Also, note that the constraints on the expressions $\xi_m^{(1)}$ for $(1\le m\le u)$ imposed by the linear equations in \eqref{3.2} are void, since the ranges for the new variables $h_j$ automatically accommodate all possible values for $\xi_m^{(1)}$within \eqref{3.2}.\par

    We now consider the effect of shifting every variable with index in a given block $\calB_m$ by an integer $z_m$ with $|z_m| \le X$. By the binomial theorem, for any family of shifts $\bz$, one finds that $(\bx, \by)$ is a solution of \eqref{3.2} if and only if it is also a solution of the system
    \begin{align*}
        \sum_{m=1}^u d_{j,m}^{(l)} \zeta_m^{(l)} = \sum_{m=1}^u d_{j,m}^{(l)} lh_m z_m^{l-1} \qquad (1 \le j \le r_l,\, 1 \le l \le k),
    \end{align*}
    where
    \begin{align*}
        \zeta_m^{(l)} = \sum_{i \in \calB_m} \left( (x_i+z_m)^l -(y_i+z_m)^l\right)\qquad (1\le m\le u, \, 1 \le l \le k).
    \end{align*}
    Thus, for each fixed integer $u$-tuple $\bz$ with $|z_m| \le X$ ($1 \le m \le u$), the mean value $G_{t, k}^{v, u}(X)$ is bounded above by the number of integral solutions of the system
    \begin{align*}
        \sum_{m=1}^u d_{j, m}^{(l)}\sum_{i \in \calB_m} (v_i^l-w_i^l)= \sum_{m=1}^u d_{j,m}^{(l)} l h_m z_m^{l-1} \qquad (1 \le j \le r_l,\, 1 \le l \le k)
    \end{align*}
    with $|\bv|, |\bw| \le 2X$ and $|\bh| \le 2tX$. On applying orthogonality and averaging over all possible choices for $\bz$, we therefore infer that
    \begin{align*}
        G_{t, k}^{v, u}(X) \ll X^{-u} \sum_{|\bz| \le X} \oint \prod_{m=1}^u |f_k(\bdel_m; 2X)|^{2t} \frk(-\bdel_m; z_m) \d \balp,
    \end{align*}
    where
    \begin{align*}
        \frk(\balp; z) = \sum_{|h| \le 2tX} e(h \alp^{(1)} + 2hz \alp^{(2)} + \ldots + khz^{k-1}\alp^{(k)}).
    \end{align*}
    The proof of the lemma is completed by reference to \eqref{2.1} and \eqref{3.1}.
\end{proof}

We can now turn to the task of estimating $H_{t,k}^{v, u}(X)$.  We will do this in somewhat wider generality than is required for the proofs of Theorems \ref{T1.3} and \ref{T1.5}. This will allow us to prove the unconditional results adumbrated in the introduction.

When $l \ge 2$ is an integer and $\sig \ge m \ge 0$, denote by $M_{l, \sig, m}(X)$ the mean value 
\begin{align}\label{3.3}
	M_{l,\sig,m}(X) = \oint |f_l(\balp; X)|^{2\sig-2m} K_l(\balp; X,X)^m \d \balp.
\end{align}
When $\sig$ and $m$ are integers, this mean value counts the number of integer tuples $\bx,\by \in [-X,X]^{\sig-m}$ and $\bz,\bh \in [-X,X]^m$ satisfying
\begin{align*}
	\sum_{i=1}^{\sig-m}(x_i^j-y_i^j)+\sum_{i=1}^m jh_iz_i^{j-1}=0\qquad (1\le j\le l).
\end{align*}
In particular, we have $M_l^*(X) = M_{l, \frac12l(l+1),2}(X)$. The main conjecture for mean values of the shape \eqref{3.3} states that for all $\sig \ge m$ one should have 
\begin{align}\label{3.4}
	M_{l,\sig,m}(X) \ll X^{\eps}(X^{\sig} + X^{2\sig - l(l+1)/2}).
\end{align}
Note that the case when $m=0$ corresponds to Vinogradov's mean value theorem, and in this case the bound \eqref{3.4} is known (see equation \eqref{2.2} above).\par

Suppose that $\sig$ and $m$ are integers with $2 \le m \le \sig$ and 
\begin{align}\label{3.5}
	m |(\sig - \textstyle{\frac12}k(k-1)).
\end{align} 
We now choose 
\begin{align}\label{3.6}
	t = K/u + (\sig - \textstyle{\frac12}k(k-1))/m, 
\end{align}
so that at the critical point $\sig = \frac12 k(k-1)$ we have $tu=K$. Note also that by \eqref{3.5} as well as the definition of $K$ in \eqref{1.4}, the quantity $t$ is indeed an integer whenever $u|kv$. 

\begin{lemma}\label{L3.2}
	Let $\sig$ and $m$ be integers with $2 \le m \le \sig$ and satisfying the conditions 
$2|m$ and $m|(\sig-\frac12k(k-1))$. Assume also that $u \ge \frac{m}{m-1} v$ and $u|kv$. Then we have 
	\begin{align*}
		H_{t,k}^{v,u}(X)\ll X^{K+u+\eps}\left( \frac{M_{k-1,\sig,m}(2tX)} {X^{k(k-1)/2}}\right)^{u/m}.
	\end{align*}
\end{lemma}

\begin{proof}
	Set 
	\begin{equation}\label{3.7}
		\frG_1(\bdel) = \prod_{j=1}^v |f_k(\bdel_j; 2X)|^{k(k+1)}
 	\end{equation}
 	and 
	\[
		\frG_2(\bdel)=\prod_{j=v+1}^u\left|f_k(\bdel_j;2X)^{2t-k(k+1)v/u}K_k(\bdel_j; X, 2tX)\right|^{u/(u-v)}. 
	\]
	Then it follows from (\ref{3.1}) via \eqref{2.3} that, after possibly relabelling variables, we have 
	\begin{align}\label{3.8}
		H_{t,k}^{v, u}(X) \ll \oint \frG_{1}(\bdel) \frG_{2}(\bdel) \d \balp.
	\end{align}
	Recall now that we had arranged for the coefficient matrices $D^{(1)},\ldots,D^{(k-1)}$ to be diagonal. Consequently, the variables $\bdel_1,\ldots, \bdel_v$ are independent of those $\alp_i^{(l)}$ having $1 \le l \le k-1$ and $v+1 \le i \le u$. Then, by setting $\bfeta_1 = (\balp_i)_{1 \le i \le v}$ and $\bfeta_2 = (\balp_i)_{v+1 \le i \le u}$, it follows that $\bfeta_1$ fully determines $\bdel_1, \ldots, \bdel_v$, and $\bfeta_1$ and $\bfeta_2$ together completely determine all entries of $\bdel$. On recalling (\ref{3.7}), we may thus rewrite the integral on the right hand side of \eqref{3.8} to obtain the bound
	\begin{align}\label{3.9}
		H_{t,k}^{v,u}(X)\ll \oint \frH_1(\bfeta_1) \frH_2(\bfeta_1) \d \bfeta_1,
	\end{align}	
	where $\frH_1(\bfeta_1)= \frG_1(\bdel)$ and 
	\[
		\frH_2(\bfeta_1)=\oint \frG_2(\bdel) \d \bfeta_2.
	\]

	\par Define 
	\[
		U_1(\bfeta_1)= \oint \prod_{j=v+1}^u |f_k(\bdel_j; 2X)|^{k(k-1)} \d \bfeta_2
	\]
	and
	\[
		U_2(\bfeta_1)=\oint \prod_{j=v+1}^u|f_k(\bdel_j;2X)^{2\sig-2m}K_k(\bdel_j;X, 2tX)^m| \d \bfeta_2.
	\]
	Also, write
	\[
		\ome=\frac{u}{m(u-v)},
	\]
	and note that, as a consequence of \eqref{1.4} and \eqref{3.6}, one has
	\begin{align*}
		(2\sig -2m)\ome + k(k-1) (1-\ome) &= \ome (2mt - 2Km/u - 2m) + k(k-1)\\
		&= \frac{2tu - (k(k-1)u+2kv-2u)  - 2u}{u-v} + k(k-1)\\
		&= \left( 2t-k(k+1)v/u\right) \frac{u}{u-v}.
	\end{align*}
	Then, since $m \ge 2$ and $u \ge \frac{m}{m-1}v$, it follows via H\"older's inequality that 
	\begin{align}\label{3.10}
		\frH_2(\bfeta_1) \ll U_1(\bfeta_1)^{1-\ome} U_2(\bfeta_1)^\ome .
	\end{align}
	
	Since $m$ is an even integer, it follows by standard orthogonality considerations that $U_1(\bfeta_1)$ and $U_2(\bfeta_1)$ count solutions to their respective associated systems of equations with degrees $1, \ldots, k-1$, with each solution being counted with a unimodular weight depending on $\bfeta_1$. It thus follows from the triangle inequality that $U_i(\bfeta_1) \le U_i(\boldsymbol 0)$ for $i =1,2$. Using the fact that the coefficient matrices $D^{(l)}$ with $1 \le l \le k-1$ are all diagonal, and recalling \eqref{2.2}, we thus discern that
	\begin{equation}\label{3.11}
     	U_1(\bfeta_1) \ll \oint \prod_{j=v+1}^u |f_{k-1}(\balp_j; 2X)|^{k(k-1)} \d \bfeta_2 \ll X^{\frac12k(k-1)(u-v)+\eps}.
	\end{equation}
	By an analogous chain of reasoning, we derive from the definition (\ref{3.3}) of $M_{l,\sig,m}(X)$  and a consideration of the underlying system of equations the corresponding bound
	\begin{align}\label{3.12}
    	U_2(\bfeta_1)&\ll \oint \prod_{j=v+1}^u|f_{k-1}(\balp_j;2X)^{2\sig-2m} K_{k-1}(\balp_j;X, 2tX)^{m}|\d \bfeta_2 \nonumber\\
    	& \ll \big(M_{k-1, \sig, m}(2tX)\big)^{u-v}.
	\end{align}
	Thus, from \eqref{3.10}, \eqref{3.11} and \eqref{3.12} we have 
	\begin{align*}
		\frH_2(\bfeta_1) &\ll X^{\frac12 k(k-1)(u-v)(1-\ome)+\eps}M_{k-1, \sig, m}(2tX)^{(u-v)\ome}\\
		&\ll X^{\frac12 k(k-1)(u-v) + \eps} \left(\frac{M_{k-1, \sig, m}(2tX)}{X^{k(k-1)/2}}\right)^{u/m}.
	\end{align*}

	\par	At this stage in our argument, we discern from \eqref{3.9} that 
	\begin{align}\label{3.13}
		H_{t,k}^{v, u}(X) &\ll X^{\frac12 k(k-1)(u-v) + \eps}\left(\frac{M_{k-1,\sig,m}(2tX)}{X^{k(k-1)/2}}\right)^{u/m}\oint \frH_1(\bfeta_1)\d \bfeta_1.
	\end{align}	
	Recall that $\frH_1(\bfeta_1)=\frG_1(\bdel)$, where $\frG_1(\bdel)$ is defined by 
(\ref{3.7}). Since the first $v \times v$ minors of the coefficient matrices $D^{(l)}$  for 
$1 \le l \le k$ are now diagonal, we deduce from \eqref{2.2} that
	\begin{align}\label{3.14}
		\oint \frH_1(\bfeta_1) \d \bfeta_1 \ll (J_{k(k+1)/2, k}(2X))^v \ll X^{\frac12k(k+1)v+\eps}.
	\end{align}	
	Finally, on substituting \eqref{3.14} into \eqref{3.13} and recalling \eqref{1.4}, we conclude that
	\[
		H_{t,k}^{v, u}(X) \ll X^{K+u+\eps}\left(\frac{M_{k-1, \sig, m}(2tX)}{X^{k(k-1)/2}}\right)^{u/m}.
	\]
	This completes the proof of the lemma.
\end{proof}

We now resume the practice of appending the suffix $j$ to the parameters $k$, $u$, $v$ 
and $K$ that we temporarily abandoned during the discussion of Lemmata \ref{L3.1} and 
\ref{L3.2}. We assume, moreover, that $\sig_j$ and $m_j$ are integers with 
$2\le m_j\le \sig_j$ and
\[
m_j|(\sig_j-\tfrac{1}{2}k_j(k_j-1)).
\]
In accordance with \eqref{3.6}, we now fix the parameters $t_j$ by taking
\begin{align}\label{3.15}
	t_j=K_j/u_j+(\sigma_j-\tfrac{1}{2}k_j(k_j-1))/m_j\quad (1\le j\le n).
\end{align}
Hence, whenever $u_j|k_jv_j$, the quantity $t_j$ is an integer. With these natural numbers $t_j$ defined thus, we recall the definition of $s_0$ from \eqref{2.7}. We are now equipped to provide an unconditional version of Theorem~\ref{T1.3}

\begin{theorem}\label{T3.3}
	Suppose that $k_1>\ldots >k_n\ge 3$. Assume further that $\bu,\bv,\bm,\bsig\in \N^n$ satisfy the relations
	\[
		2|m_j,\quad 2\le m_j\le \sig_j,\quad u_j \ge \frac{m_j}{m_j-1} v_j,\quad u_j | k_j v_j\quad \text{and}\quad m_j|(\sig_j-\tfrac{1}{2}k_j(k_j-1))
	\]
 	for $1 \le j \le n$. Let $u_0$ be a non-negative integer. Then for any $\eps>0$, one has 
	\[
		I_{s_0,\bk}^{\bv,\bu}(X) \ll X^{K+\eps} \prod_{j=1}^n \left( \frac{M_{k_j-1,\sig_j,m_j}(2t_jX)}{X^{k_j(k_j-1)/2}} \right)^{u_j/m_j}.
	\]
\end{theorem}

\begin{proof} 
	We apply Lemma \ref{L2.2} with $s=s_0$, followed by Lemmata \ref{L2.3}, \ref{L3.1} and \ref{L3.2}. This shows that
	\[
		I_{s_0,\bk}^{\bv,\bu}(X) \ll X^{2u_0+\eps}\prod_{j=1}^n X^{K_j}\left( \frac{M_{k_j-1,\sig_j,m_j}(2t_jX)}{X^{k_j(k_j-1)/2}} \right)^{u_j/m_j},
	\]
	and the proof is complete upon reference to \eqref{1.9}.
\end{proof}

We can now complete the proof of Theorem~\ref{T1.3}. To this end, we choose $m_j=2$ and $\sig_j=\frac12k_j(k_j-1)$ for $1\le j\le n$. With this choice of parameters the hypotheses of Theorem \ref{T3.3} are satisfied whenever $\bk, \bv$ and $\bu$ are in accordance with the conditions of Theorem \ref{T1.3}, and moreover the conjectural bound $M_{k_j-1,\sig_j,2}(2t_jX)\ll X^{\sig_j+\eps}$ is then tantamount to both \eqref{1.6} and \eqref{3.4}. Thus, in the case $s=s_0$ the desired conclusion is an immediate consequence of the conclusion of Theorem \ref{T3.3}, and for general values of $s$ it follows in like manner upon utilising the additional flexibility offered by Lemma \ref{L2.2}.

\section{The Hardy-Littlewood method}\label{S4}
We can now initiate the derivation of Theorem~\ref{T1.5} from the mean value estimate of Theorem~\ref{T1.3}. We shall prove the following rather more general result. 

\begin{theorem}\label{T4.1}
	Suppose that $k_1 > \ldots > k_n \ge 3$. Suppose further that $\bu, \bv, \bm, \bsig \in \N^n$ lie in the respective ranges
	\[
		2\le m_j\le \sig_j,\quad \sig_j \ge \tfrac12 k_j(k_j-1) \quad \text{and}\quad u_j\ge \frac{m_j}{m_j-1} v_j,
	\]
	and satisfy the divisibility conditions
	\[
		2|m_j,\quad u_j|k_j v_j\quad \text{and}\quad m_j | (\sig_j-\tfrac{1}{2}k_j(k_j-1)),
	\]
	for $1 \le j \le n$. Assume, moreover, that
	\begin{align}\label{4.0}
		M_{k_j-1,\sig_j,m_j}(X)\ll X^{2\sig_j-\tfrac{1}{2}k_j(k_j-1)+\eps} \quad (1\le j\le n).
	\end{align}
	Let $u_0$ be a non-negative integer, put $t_0=2$ and define $t_j$ via \eqref{3.15} for $1 \le j \le n$. Set $s_0=t_0u_0 + \ldots + t_n u_n$, suppose that $s \ge 2s_0+1$ and write $K=2u_0+K_1+\ldots +K_n$ for the total degree of the system as usual. Then the asymptotic formula 
	\[
		N_{s,\bk}^{\bv,\bu}(X)=(\calC+o(1))X^{s-K}
	\]
	holds with $\calC \ge 0$. If, furthermore, the system \eqref{1.10} has non-singular solutions in $\R$ as well as in the fields $\Q_p$ for all $p$, then the constant $\calC$ is positive. 
\end{theorem}

Note that Theorem~\ref{T1.5} follows from the special case of Theorem \ref{T4.1} in 
which $m_j=2$ and $\sig_j = \frac12 k_j (k_j-1)$ for $1 \le j \le n$. 

We make use of the notation introduced in \S\ref{S2}, and recall in particular \eqref{2.6} 
and its sequel. From now on we will set $r =r_2+\ldots +r_k$ and $w=u_0+\ldots +u_n$, 
so that $w = r_2 = w_n+u_0$. Also, we will assume throughout that $\bk$, $\bv$, $\bu$, 
$\bsig$, $\bm$ satisfy the hypotheses of the statement of Theorem~\ref{T4.1}.\par

When $\frB \subseteq [0,1)^{r}$ is a measurable set, put
\begin{equation}\label{4.1}
    N_{s,\bk}^{\bv,\bu}(X; \frB) = \int_{\frB} \prod_{j=1}^s g_k(\bgam_j;X) \d \balp.
\end{equation}
Our Hardy--Littlewood dissection is defined as follows. When $Y$ and $Q$ are parameters with $1 \le Q \le Y$, we take the major arcs $\frM_Y=\frM_Y(Q)$ to be the union of the boxes
\begin{align}\label{4.2}
    \frM_Y(q,\ba)=\left\{ \balp\in [0,1)^r:| \alp_j^{(l)}-a_j^{(l)}/q| \le QY^{-l}\quad (1 \le j \le r_l,\, 2 \le l \le k)\right\},
\end{align}
with $0\le \ba\le q\le Q$ and $(q,\ba) = 1$. The corresponding set of minor arcs $\frm_Y=\frm_Y(Q)$ is defined by putting $\frm_Y(Q)= [0,1)^r\setminus \frM_Y(Q)$. Unless indicated otherwise, we fix $Y=X$ and $Q=X^{1/(6r)}$, and abbreviate $\frM_X$ to 
$\frM$ and $\frm_X$ to $\frm$.\par

We require certain auxiliary functions in order to analyse the contribution of the major arcs $N_{s,\bk}^{\bv,\bu}(X;\frM)$. Write
\begin{align*}
    S_k(q, \ba) = \sum_{x=1}^q e((a^{(2)} x^2 + \ldots + a^{(k)} x^k)/q),
\end{align*}
and recall that the argument of \cite[Theorem~7.1]{Vau} gives
\begin{align}\label{4.3}
    S_k(q, \ba) \ll (q, \ba)^{1/k}q^{1-1/k+\eps}.
\end{align}
Further, set
\begin{align*}
    v_k(\bbet; X) = \int_{-X}^X e(\bet^{(2)}z^2 + \ldots + \bet^{(k)} z^k) \d z,
\end{align*}
and recall from the arguments of \cite[Theorem~7.3]{Vau} the estimate
\begin{align}\label{4.4}
    v_k(\bbet; X) \ll X \left(1+|\bet^{(2)}|X^2+ \ldots + 
|\bet^{(k)}|X^k\right)^{\hskip-.5mm-1/k}.
\end{align}
We put
\begin{align}\label{4.5}
    \Lam_j^{(l)} = \sum_{i=1}^{r_l} c_{i,j}^{(l)} a_i^{(l)} \quad \text{and} \quad 
\tet_j^{(l)}=\sum_{i=1}^{r_l}c_{i,j}^{(l)}\bet_i^{(l)} \qquad (1 \le j \le s,\, 2 \le l \le k).
\end{align}
Following the same convention regarding vector notation as we applied for $\bgam$ in 
\eqref{2.6} and its sequel, we have $\btet = \bgam - \bLam/q$. Then as a consequence 
of \cite[Theorem~7.2]{Vau}, we find that when $\balp = \ba/q+\bbet \in \frM$, one has
\begin{align}\label{4.6}
    g_k(\bgam_j; X) = q^{-1} S_k(q, \bLam_j) v_k(\btet_j; X) + O(Q^2).
\end{align}

\par Finally, define
\begin{align}\label{4.7}
    \frS(Q) = \sum_{q \le Q} \sum_{\substack{1 \le \ba \le q \\ (q, \ba)=1}} 
\prod_{j=1}^s q^{-1}S_k(q, \bLam_j)
\end{align}    
and
\begin{align*}    
     \frJ_X(Q) = \int_{\calI(X,Q)} \prod_{j=1}^s v_k(\btet_j; X) \d \bbet,
\end{align*}
where
\begin{align*}
    \calI(X, Q) = \prod_{l=2}^k [-QX^{-l}, QX^{-l}]^{r_l}.
\end{align*}
The preliminary conclusion of our major arcs analysis is summarised in the following 
lemma.

\begin{lemma}\label{L4.1}
    There is a positive number $\ome$ for which
    \begin{align*}
        N_{s, \bk}^{\bv, \bu}(X;\frM) = X^{s-K}\frS(Q) \frJ_1(Q) + O(X^{s-K-\ome}).
    \end{align*}
\end{lemma}

\begin{proof}
     Since $\vol(\frM)\ll Q^{2r+1}X^{-K}$, it follows from \eqref{4.6} that
    \begin{align}\label{4.8}
        N_{s, \bk}^{\bv, \bu}(X; \frM) = \frS(Q) \frJ_X(Q) + O(X^{s-K-1}Q^{2r+3}).
    \end{align}
    Furthermore, by a change of variables we see that $\frJ_X(Q) = X^{s-K} \frJ_1(Q)$. 
The conclusion of the lemma therefore follows from our choice $Q=X^{1/(6r)}$.
\end{proof}

In order to address the contribution of the minor arcs, we need the following Weyl-type 
estimate.
\begin{lemma}\label{L4.2}
    Suppose that $\balp \in \frm$. There exists $\tau>0$ such that for each $w$-tuple 
$(j_1,\ldots, j_w)$ of distinct indices there exists an index $i$ with $1\le i\le w$ for which 
one has
    \begin{align*}
        |g_k(\bgam_{j_i}; X)| \le XQ^{-\tau}.
    \end{align*}
\end{lemma}
\begin{proof}
    This is the content of \cite[Lemma~3.1]{BP2017}. Note that the minor arcs in our 
setting are a subset of the minor arcs defined in the context of that lemma. 
\end{proof}

We now complete the analysis of the minor arcs for Theorem~\ref{T4.1}.
\begin{lemma}\label{L4.3}
    Assume the hypotheses of Theorem~\ref{T4.1}. Then there is a positive number 
$\ome$ for which $N_{s, \bk}^{\bv, \bu}(X;\frm) \ll X^{s-K-\ome}$.
\end{lemma}

\begin{proof}
    Given a measurable set $\frB \subseteq [0,1)^r$, we write
    \begin{align*}
        N^*(X; \frB) = \int_{\frB}\prod_{j=1}^{2s_0+1}|g_k(\bgam_j; X)| \d \balp.
    \end{align*}
	We begin by estimating the last $s-(2s_0+1)$ exponential sums in the product \eqref{4.1} trivially, so that
    \begin{equation}\label{4.9}
        N_{s,\bk}^{\bv,\bu}(X;\frm) \ll X^{s-(2s_0+1)}N^*(X;\frm) .
    \end{equation}
    For $1 \le i \le w$ and $\tau>0$ sufficiently small, let $\frm^{(i)}$ denote the set of $\balp \in [0,1)^r$ for which $|g_k(\bgam_i; X)| \le XQ^{-\tau}$. In view of \eqref{2.3}, we can identify a subset of indices $\calJ_i \subseteq \{1, \dots, 2s_0+1\} \setminus \{ i \}$ with $\card(\calJ_i)=s_0$ for which
    \begin{align*}
	    N^*(X;\frm^{(i)})\ll XQ^{-\tau}\oint \prod_{j\in \calJ_i}|g_k(\bgam_j;X)|^2\d\balp.
    \end{align*} 
    Write $C_i^{(l)}$ for the submatrix of $C^{(l)}$ having columns indexed by $\calJ_i$. The condition that the coefficient matrices $C^{(l)}$ be highly non-singular implies that the submatrices $C_i^{(l)}$ of $C^{(l)}$ are also highly non-singular. Thus, by orthogonality, we see from the definition \eqref{1.8} of the mean value $I_{s_0,\bk}^{\bv,\bu}(X)$ that  
    \begin{align*}
      N^*(X;\frm^{(i)})\ll XQ^{-\tau}I_{s_0,\bk}^{\bv,\bu}(X;C^{(2)}_i,\ldots,C^{(k)}_i).
    \end{align*}
    Consider a fixed $\balp \in \frm$. If $\tau$ has been chosen sufficiently small, Lemma~\ref{L4.2} ensures that we can find an index $j$ with $1 \le j \le w$ such that $\balp \in \frm^{(j)}$. Thus we see that we have the inclusion $\frm \subseteq \frm^{(1)} \cup \dots \cup \frm^{(w)}$, whence
    \begin{align}\label{4.10}
        N^*(X;\frm)\ll XQ^{-\tau}\max_{1 \le i \le w} I_{s_0,\bk}^{\bv,\bu}(X;C_i^{(2)}, \ldots, C_i^{(k)}).
    \end{align}

    \par Now recall that $Q=X^{1/(6r)}$. Note also that the hypotheses of Theorem \ref{T4.1} under which we are currently working permit the assumption of those of Theorem \ref{T3.3}. Thus, upon combining the estimate \eqref{4.10} with Theorem~\ref{T3.3}, inserting \eqref{4.0} and recalling \eqref{3.15}, we obtain the bound
	\begin{align*}
        N^*(X;\frm)&\ll XQ^{-\tau}X^{K+\eps}\prod_{j=1}^n \left(X^{2\sig_j-k_j(k_j-1)}\right)^{u_j/m_j}\\
        &\ll XQ^{-\tau}X^{K+\eps}\prod_{j=1}^n X^{2t_ju_j-2K_j}\\
        &\ll X^{2s_0+1-K}Q^{-\tau/2}.
    \end{align*}
	By substituting this estimate into \eqref{4.9}, we obtain the conclusion of the lemma.
\end{proof}

Upon combining the results of Lemmata \ref{L4.1} and \ref{L4.3}, we infer that for some $\omega>0$ one has the asymptotic formula
\begin{align}\label{4.11}
    N_{s,\bk}^{\bv,\bu}(X) = X^{s-K}\frS(Q) \frJ_1(Q) + O(X^{s-K-\ome}).
\end{align}
This completes our analysis of the minor arcs.

\section{Initial considerations for the singular series}\label{S5}
It remains to show that the singular series $\frS(Q)$ and singular integral $\frJ_1(Q)$ converge as $Q$ tends to infinity. We now put 
\begin{align*}
	\ell_j = \begin{cases}
		k_{i} & \text{when $w_{i-1}+1 \le j \le w_{i-1}+v_i$}, \\
		k_{i}-1 & \text{when $w_{i-1}+ v_i+1  \le j \le w_{i}$},\\
		2 & \text{when $w_n+1 \le j \le w$}.
	\end{cases}
\end{align*}
In this notation, the system under consideration can be viewed as a superposition of $w$ Vinogradov systems with respective degrees $\ell_j$, all missing the linear slice, and thus it follows from the definition \eqref{1.9} that the total degree of this system is
\begin{align*}
    K =  \sum_{j=1}^w \big( \textstyle{\frac12} \ell_j(\ell_j+1)-1 \big) .
\end{align*}
Throughout this and the next section, we work under the assumption that $s \ge 2K+1$. 

We first attend to the singular series. Put
\begin{align}\label{5.1}
    A(q) = q^{-s} \msum{q}{\ba} \prod_{j=1}^s S_k(q, \bLam_j).
\end{align}
By applying \eqref{2.3}, we find that for some choice of distinct indices $j_1, \ldots, j_w \in \{1, \ldots, s\}$ we have the asymptotic bound
\begin{align}\label{5.2}
    A(q)&\ll q^{2K-s} \max_{\substack{1 \le \ba \le q \\ (q, \ba) = 1}} \left(\prod_{i=1}^w |S_k(q, \bLam_{j_i})| \right)^{(s-2K)/w} A_1(q),
\end{align}
where
\begin{align*}
    A_1(q)=q^{-2K}\msum{q}{\ba}\prod_{i=1}^w
|S_k(q,\bLam_{j_i})|^{\ell_i(\ell_i+1)-2}.
\end{align*}
Note that both $A(q)$ and $A_1(q)$ are multiplicative in $q$. For this reason, the key to understanding the singular series is to maintain good control over the multiplicative quantity 
\begin{equation}\label{5.3}
	 B_1(q) = \sum_{d | q} A_1(d)
\end{equation}
as $q$ runs over the prime powers. 

Define $\tau_j$ by setting $\tau_j = \frac12\ell_j(\ell_j+1)-1$ for $ 1 \le j \le w$, and write $T_j = \tau_1 + \ldots + \tau_j$, so that $T_w = K$. For consistency we also set $T_0 = 0$. Now, adopting a notation similar to that of Section \ref{S2}, when $2 \le l \le k$ we write $D^{(l)}$ for the submatrices
\begin{align*}
    \big(d_{i, h}^{(l)}\big)_{\substack{1 \le i \le r_l\\ 1 \le h \le w}} = 
\big(c_{i, j_h}^{(l)}\big)_{\substack{1 \le i \le r_l\\ 1 \le h \le w}}
\end{align*}
of the coefficient matrices $C^{(l)}$ consisting of the columns indexed by 
$j_1, \ldots, j_w$. Note that the hypothesis that each $C^{(l)}$ is highly non-singular 
ensures that the same is true for each $D^{(l)}$. For $1 \le h \le w$ and $2\le l\le k$ we 
set $\Del_h^{(l)} = \Lam_{j_h}^{(l)}$, and we employ the same conventions regarding 
vector notation as in \eqref{4.5} and also \eqref{2.6} and its sequel. Thus, we write 
$\bDel_j = (\Del_j^{(l)})_{2 \le l \le k}$ and 
$\bDel^{(l)} = (\Del_j^{(l)})_{1 \le j \le w}$, so that 
\begin{align}\label{5.4}
	\bDel^{(l)} = (D^{(l)})^T \ba^{(l)} \qquad (2 \le l \le k).
\end{align}
In this notation, it follows from standard orthogonality relations that 
\begin{align*}
	q^{2K-r}B_1(q)=q^{-r}\sum_{1\le\ba\le q}\prod_{j=1}^w|S_k(q,\bDel_j)|^{2\tau_j}
\end{align*}	
counts the number of solutions $\bx, \by \in (\Z/q\Z)^K$ of the system of congruences
\begin{align}\label{5.5}
	\sum_{j=1}^w d_{i,j}^{(l)} \Bigg(\sum_{h=T_{j-1}+1}^{T_j} (x_h^l - y_h^l)\Bigg) 
\equiv 0 \pmod {q},
\end{align}
where $1 \le i \le r_l$ and $2 \le l \le k$.

Our first goal is to apply a procedure inspired by the proof of Theorem~2.1 in 
\cite{BP2017} in order to disentangle the congruences in \eqref{5.5}. This will enable us 
to  replace the sum $B_1(q)$ by a related expression in which for all indices $j$ the 
degree $k$ in the exponential sum $S_k(q, \bDel_j)$ is replaced by $\ell_j$. Since $\ell_j$ 
is typically smaller than $k$, we will reap the rewards of this preparatory step when the 
reduced degrees allow us to exert greater control on the size of the exponential sums in 
question. 

Given a $(k-1)$-tuple of variables $\xi^{(2)}, \ldots, \xi^{(k)}$, we adopt the convention 
that $\bxi^{[l]} = (\xi^{(2)}, \ldots, \xi^{(l)})$ for $2 \le l \le k$. Also, when 
$\bd = (d_2, \dots, d_k)$ is a coefficient vector, we abbreviate the vector 
$(d_2 \xi^{(2)}, \dots, d_k \xi^{(k)})$ to $\bd \bxi$, and we appropriate the notation 
$\bd^{[l]}$ and $(\bd \bxi)^{[l]}$ to denote the corresponding subvectors whose 
entries are indexed by $2 \le i \le l$. The following observation will play a part in our 
ensuing arguments.

\begin{lemma}\label{L5.1}
	Let $l$, $q$ and $t$ be natural numbers, with $2 \le l \le k-1$. Suppose that 
$d_2, \dots, d_k$ and $c_2, \dots, c_k$ are fixed integers, and put 
	\begin{align*}
		\Gam_q(\bd^{[l]}) = \prod_{j=2}^{l} (q, d_j).
	\end{align*} 
	Then for any fixed integers $a^{(l+1)}, \dots, a^{(k)}$ we have
	\begin{align*}
		\sum_{1 \le \ba^{[l]} \le q} |S_k(q, \bd \ba + \bc)|^{2t} \le \Gam_q(\bd^{[l]}) 
\sum_{1 \le \ba^{[l]} \le q} |S_l(q, \ba^{[l]})|^{2t}.
	\end{align*}
\end{lemma}

\begin{proof}
	By standard orthogonality relations, the sum 
	\begin{align}\label{5.6}
		T = q^{1-l}\sum_{1 \le \ba^{[l]} \le q} |S_k(q, \bd\ba + \bc)|^{2t} 
	\end{align}	
	counts solutions $\bx, \by \in (\Z/q\Z)^t$ of the system of congruences
	\begin{align}\label{5.7}
		d_j \sum_{i=1}^{t} (x_i^j-y_i^j) \equiv 0 \pmod {q} \qquad (2 \le j \le l),
	\end{align}
	where each solution is counted with a unimodular weight depending on the inert 
variables $a^{(l+1)}, \ldots, a^{(k)}$, together with the coefficients $\bd$ and $\bc$. 
Thus, by the triangle inequality, one finds that 
	\begin{align*}
		T \le q^{1-l} \sum_{1 \le \ba^{[l]} \le q} |S_l(q, (\bd \ba)^{[l]})|^{2t}. 
	\end{align*}
	We therefore discern that $T$ is bounded above by the number of solutions of 
\eqref{5.7} counted without weights, and hence by the number of solutions 
$\bx, \by \in (\Z/q\Z)^t$ of the system of congruences 
	\begin{align*}
		\sum_{i=1}^{t} (x_i^j - y_i^j) \equiv 0 \pmod{q/(q, d_j)} \qquad (2 \le j \le l).
	\end{align*}
	We interpret the latter as the number of solutions of the system
	\begin{align*}
		\sum_{i=1}^t (x_i^j-y_i^j) \equiv \frac{e_j q}{(q,d_j)} \pmod q \qquad 
(2 \le j \le l),
	\end{align*}
	with $\bx, \by \in (\Z/q\Z)^t$ and $1 \le e_j \le (q, d_j)$ for $2 \le j \le l$. Thus, by 
orthogonality and the triangle inequality, one sees that 
	\begin{align*}
		T &\le \sum_{\substack{1 \le e_j \le (q, d_j) \\ (2 \le j \le l)}} q^{1-l} 
\sum_{1 \le \ba^{[l]} \le q} |S_l(q, \ba^{[l]})|^{2t} e \left( - \sum_{j=2}^l 
\frac{e_ja^{(j)}}{(q, d_j)}\right)\\
		&\le \Gam_q(\bd^{[l]}) q^{1-l} \sum_{1 \le \ba^{[l]} \le q} 
|S_l(q, \ba^{[l]})|^{2t}.
	\end{align*} 
	The conclusion of the lemma is now immediate from \eqref{5.6}.
\end{proof}

We now define 
\begin{align*}
	B_1^*(q) = q^{-2K}\sum_{1 \le \ba \le q} \prod_{j=1}^w 
|S_{\ell_j}(q, \ba_j^{[\ell_j]})|^{2\tau_j}.
\end{align*}
The crucial bound for our analysis of the singular series is contained in the following 
lemma.

\begin{lemma}\label{L5.2}
	Let $q$ be a natural number, and suppose that the matrices $D^{(l)}$ are all highly 
non-singular. Then there exists a finite set of primes $\Ome(D)$ and a natural number 
$\calR(q) = \calR(q,D)$, both depending at most on the coefficient matrices $D^{(l)}$ 
and in the latter case also $q$, with the property that 
	\begin{align*}
		B_1(q) \le \calR(q) B_1^*(q).
	\end{align*}
	The constant $\calR(q)$ is bounded above uniformly in $q$, and one can take 
$\calR(q) = 1$ whenever $(q, p)=1$ for all $p \in \Ome(D)$.	
\end{lemma}

\begin{proof}
	Recall that $q^{2K-r}B_1(q)$ counts the number of solutions $\bx, \by \in (\Z/q\Z)^K$ 
of the system of congruences \eqref{5.5} for $1 \le j \le r_l$ and $2 \le l \le k$. Since 
$B_1(q)$ is a multiplicative function of $q$, it is apparent that it suffices to establish the 
conclusion of the lemma in the special case in which $q$ is a prime power, say $q=p^h$ 
for a given prime $p$. By applying suitable elementary row operations within the 
coefficient matrices $D^{(l)}$ for $2 \le l \le k$ that are invertible over $\Z/p^h\Z$, we 
may suppose without loss of generality that each coefficient matrix $D^{(l)}$ is in upper 
row echelon form. This operation corresponds to taking appropriate linear combinations of 
the congruences comprising \eqref{5.5}. Here, we stress that the property that each 
$D^{(l)}$ is highly non-singular implies that the first $r_l \times r_l$ submatrix of 
$D^{(l)}$ is now upper triangular. We denote this matrix by $D_0^{(l)}$. Note that the 
power of $p$ dividing the diagonal entries of $D_0^{(l)}$ depends only on the first 
$r_l\times r_l$ submatrices of the original coefficient matrices $D^{(l)}$. In particular, by 
defining $\Ome(D)$ to be the set of all primes dividing any of the determinants of the 
latter submatrices, we ensure that when $p \not\in \Ome(D)$, then none of the diagonal 
entries of $D_0^{(l)}$ is divisible by $p$.\par  
	
	We now employ an inductive argument in order to successively reduce the degrees of 
the exponential sums occurring within the mean value
	\[
		B_1(p^h)=p^{-2Kh}\sum_{1\le \ba\le p^h} \prod_{j=1}^w 
|S_k(p^h,\bDel_j)|^{2\tau_j}.
	\] 

	Observe that, as a result of our preparatory manipulations, the 
$r_{\ell} \times r_{\ell}$ coefficient matrics $D^{(l)}$ with $2 \le l \le \ell_w$ are upper 
triangular. Thus, the only exponential sum within the above formula for $B_1(p^h)$ that 
depends on $\ba_w^{[\ell_w]}$ is the one involving $\bDel_w$. In order to save clutter, 
we temporarily drop the modulus $p^h$ in our exponential sums $S_{k}(p^h, \bDel_j)$. 
We may thus write 
	\[
		B_1(p^h) = p^{-2Kh} 
\sum_{\substack{1 \le \ba_j^{[\ell_j]}\le p^h \\ (1 \le j \le w-1)}} 
\left(\prod_{j=1}^{w-1} |S_k(\bDel_j)|^{2\tau_j}\right) 
\sum_{1 \le \ba_w^{[\ell_w]} \le p^h} |S_k(\bDel_w)|^{2\tau_w}.  
	\]
	The inner sum is of the shape considered in Lemma~\ref{L5.1} with $l=\ell_w$. On 
writing $\bd_j = (d_{j,j}^{(l)})_{2 \le l \le \ell_j}$ ($1 \le j \le w$), we thus obtain the 
bound  
	\[
		B_1(p^h) \le p^{-2Kh} \Gam_{p^h}(\bd_w) 
\sum_{\substack{1 \le \ba_j^{[\ell_j]}\le p^h \\ (1 \le j \le w-1)}} 
\left(\prod_{j=1}^{w-1} |S_k(\bDel_j)|^{2\tau_j} \right)
\sum_{1 \le \ba_w^{[\ell_w]} \le p^h} |S_{\ell_w}(\ba_w^{[\ell_w]})|^{2\tau_w}.  
	\]	

	\par	Now suppose that for some index $j$ with $1 \le j \le w-1$ we have the bound 
	\begin{align}\label{5.8}
		B_1(p^h) &\le p^{-2Kh} \Ups_j\prod_{i=j+1}^{w} \Gam_{p^h}(\bd_i) 
\sum_{1 \le \ba_i^{[\ell_i]} \le p^h} |S_{\ell_i}(\ba_i^{[\ell_i]})|^{2\tau_i},  
	\end{align}	
	where
	\begin{align*}
		\Ups_j=\sum_{\substack{1 \le \ba_i^{[\ell_i]}\le p^h \\ (1 \le i \le j)}} 
\left(\prod_{i=1}^{j} |S_k(\bDel_i)|^{2\tau_i} \right) .
	\end{align*}
	Again, since we may assume all coefficient matrices $D^{(l)}$ to be in upper row 
echelon form, the only exponential sum within the mean value defining $\Ups_j$ that 
depends on the vector $\ba_j^{[\ell_j]}$ is the one involving $\bDel_j$. Thus, as in the 
case $j=w$ considered above, we may isolate the exponential sum indexed by $j$ and 
apply Lemma~\ref{L5.1}. As a result, we find that
	\begin{align*}
		\Ups_j&= \sum_{\substack{1 \le \ba_i^{[\ell_i]}\le p^h \\ (1 \le i \le j-1)}} 
\left(\prod_{i=1}^{j-1}|S_k(\bDel_i)|^{2\tau_i}\right)\sum_{1\le \ba_j^{[\ell_j]}\le p^h} 
|S_k(\bDel_j)|^{2\tau_j}\\
		& \le \Gam_{p^h}(\bd_j) \sum_{1 \le \ba_j^{[\ell_j]} \le p^h} 
|S_{\ell_j}(\ba_j^{[\ell_j]})|^{2\tau_j} \sum_{\substack{1 \le \ba_i^{[\ell_i]}\le p^h \\ 
(1 \le i \le j-1)}} \prod_{i=1}^{j-1} |S_k(\bDel_i)|^{2\tau_i}  .
	\end{align*}
	Inserting this bound into \eqref{5.8} reproduces \eqref{5.8} with $j$ replaced by 
$j-1$. We may clearly iterate, and after $w$ steps we find that
	\begin{align*}
		B_1(p^h) &\le p^{-2Kh}  \prod_{j=1}^{w} \Gam_{p^h}(\bd_j) 
\sum_{1 \le\ba_j^{[\ell_j]} \le p^h } |S_{\ell_j}(\ba_j^{[\ell_j]})|^{2\tau_j}. 
	\end{align*}		
	Clearly, the vectors $\ba_j^{[\ell_j]}$ with $1 \le j \le w$ together list the coordinates 
of $\ba$. Since $B_1(q)$ is multiplicative, the assertion of the lemma is now confirmed 
upon taking $\calR(q)$ to be the multiplicative function defined via the formula
	\begin{align*}
		\calR(p^h) = \prod_{j=1}^{w} \Gam_{p^h}(\bd_j).
	\end{align*}	
	In particular, we note that $\calR(p^h)$ depends at most on the coefficient matrices 
$D^{(l)}$, and one has $\calR(p^h)=1$ whenever $p \not\in \Ome(D)$.
\end{proof}

\section{Conclusion of the major arcs analysis}\label{S6}
With Lemma~\ref{L5.2} we are now equipped to engage with our goal of showing that the singular series $\frS = \underset{Q \to \infty}{\lim}\frS(Q)$ converges absolutely. In this context, for each prime number $p$ we define the $p$-adic factor
\begin{equation}\label{6.1}
	\chi_p=\sum_{h=0}^\infty A(p^h).
\end{equation}

\begin{lemma}\label{L6.1}
    Suppose that the coefficient matrices $C^{(l)}$ associated with the system \eqref{1.10} are highly non-singular, and that $r_l \ge r_{l+1}$ for $2 \le l \le k-1$. Also, assume that $s \ge 2K+1$. Then the $p$-adic densities $\chi_p$ exist, the singular series $\frS$ is absolutely convergent, and $\frS=\prod_p\chi_p$. In particular, one has $\frS(Q)=\frS+O(Q^{-\delta})$ for some $\delta>0$. Moreover, if the system \eqref{1.10} has a non-singular $p$-adic solution for all primes $p$, then $\frS\gg 1$.
\end{lemma}

\begin{proof}
     On recalling \eqref{4.7} and \eqref{5.1}, we see that $\frS(Q)=\sum_{1\le q\le Q}A(q)$, and so the estimation of the quantity $A(q)$ is our central focus. The multiplicativity of $A(q)$ allows us to restrict our attention to the cases where $q$ is a prime power. Set $\chi_p(H) = \sum_{h=0}^H A(p^h)$ and $L_p(Q) = \lfloor \log Q / \log p \rfloor$. If the product 
    \begin{align*}
    	\prod_{p \le Q} \chi_p(L_p(Q))
    \end{align*}
    converges absolutely as $Q \rightarrow \infty$, then so does $\frS(Q)$ with the same limit. In such circumstances, one has $\frS=\prod_p\chi_p$. It is therefore sufficient to show that for all primes $p$ the limit
	\begin{align*}
    	\chi_p = \lim_{H \to \infty} \chi_p(H)
	\end{align*}
	exists, and moreover that there exists a positive number $\delta$ having the property that $\chi_p = 1 + O(p^{-1-\delta})$ for all but at most a finite set of primes $p$. 
    
	On recalling \eqref{5.2}, we find from \eqref{4.3} that
	\begin{align*}
    	A(p^h) & \ll (p^h)^{2K-s}\max_{\substack{1 \le \ba \le p^h \\(\ba,p)=1}} \left(\prod_{j=1}^w |S_k(p^h, \bDel_j)|\right)^{(s-2K)/w} A_1(p^h)\\
    	& \ll \max_{\substack{1 \le \ba \le p^h \\(\ba,p)=1}} \left(\prod_{j=1}^w (p^h)^{-1/k +\eps}(p^h, \bDel_j)^{1/k}\right)^{(s-2K)/w} A_1(p^h).
	\end{align*}
	The invertibility of the coordinate transform \eqref{5.4} implies that when 
$(\ba,p^h)=1$, then there is at least one index $j$ with $1\le j\le w$ such that 
$(p^h,\bDel_j)\ll 1$, with an implied constant depending at most on the coefficient 
matrices $C^{(l)}$. Since $s - 2K \ge 1$ and $\eps$ may be taken arbitrarily small, we 
deduce that there is a positive number $c_1$, depending at most on the coefficient 
matrices $C^{(l)}$, having the property that
    \begin{align}\label{6.2}
        A(p^h) \le c_1 p^{-h/(2kw)} A_1(p^h).
    \end{align}

    \par We now wish to apply Lemma~\ref{L5.2}. To this end, we first recall \eqref{5.3} and observe that a summation by parts yields the relation
    \begin{align}\label{6.3}
        \sum_{h=0}^L p^{-{\textstyle \frac{h}{2kw}}} A_1(p^h) =  p^{-{\textstyle \frac{L}{2kw}}} B_1(p^L) + \sum_{h=0}^{L-1} \Big(p^{-{\textstyle \frac{h}{2kw}}}-p^{-{\textstyle \frac{h+1}{2kw}}}\Big)B_1(p^h).
    \end{align}
    Since all coefficients on the right hand side are positive, and also both $B_1(p^h)$ and $B_1^*(p^h)$ are non-negative for all non-negative integers $h$, it follows from Lemma~\ref{L5.2} that we may majorise the right hand side of \eqref{6.3} by replacing $B_1(p^h)$ with $\calR(p^h) B_1^*(p^h)$ for $0\le h\le L$. Set $\calR_p=\max_{h\ge 0} \calR(p^h)$, noting that this maximum exists as $\calR(p^h)$ is an integer which is bounded uniformly for all non-negative integers $h$. Also, in analogy to the definition of $B_1^*(p^h)$, we put 
	\begin{align*}
		A_1^*(p^h) = p^{-2Kh}\sum_{\substack{1\le \ba\le p^h\\(\ba,p)=1}}\prod_{j=1}^w |S_{\ell_j}(p^h, \ba_j^{[\ell_j]})|^{2\tau_j}.
	\end{align*}    
    Thus, another summation by parts shows that the right hand side of \eqref{6.3} is no larger than
    \begin{align*}
		\calR_p \left(p^{-{\textstyle \frac{L}{2kw}}} B^*_1(p^L) + \sum_{h=0}^{L-1} \Big(p^{-{\textstyle \frac{h}{2kw}}}-p^{-{\textstyle \frac{h+1}{2kw}}}\Big)B^*_1(p^h)\right) = \calR_p \sum_{h=0}^L p^{-{\textstyle \frac{h}{2kw}}} A_1^*(p^h).
	\end{align*}    
	We have therefore established the bound 
    \begin{align}\label{6.4}
		\sum_{h=0}^L p^{-{\textstyle \frac{h}{2kw}}} A_1(p^h) \le \calR_p \sum_{h=0}^L p^{-{\textstyle \frac{h}{2kw}}} A_1^*(p^h).
	\end{align}  	
    
    \par Since $2\tau_j = \ell_j(\ell_j+1)-2 \ge \ell_j^2$ for all $j$, we can infer further from \eqref{4.3} that there exists a positive number $c_2$, depending at most on $\eps$, such that
    \begin{align*}
        A_1^*(p^h) &\le c_2 p^{h \eps}\sum_{\substack{1\le \ba\le p^h\\(\ba,p)=1}} 
\prod_{j=1}^w \left(p^{-h/\ell_j}(p^h, \ba_j^{[\ell_j]})^{1/\ell_j}\right)^{2\tau_j}
        \le c_2 p^{h \eps}\sum_{\substack{1\le \ba\le p^h\\(\ba,p)=1}}\prod_{j=1}^w 
p^{-h\ell_j}(p^h, \ba_j^{[\ell_j]})^{\ell_j}.
    \end{align*}
    For a fixed vector $\be \in \Z_{\ge 0}^w$ denote by $\Xi(p^h,\be)$ the number of vectors $\ba\in\Z^r$ satisfying $1\le \ba\le p^h$ and $(p^h,\ba_j^{[\ell_j]})=p^{e_j}$ for $1 \le j \le w$. Then one has
    \[
        A_1^*(p^h)\le c_2p^{h\eps}\sum_{\be}\Xi(p^h,\be)\prod_{j=1}^w p^{(e_j-h)\ell_j} ,
    \]
    where the sum is over all vectors $\be \in \Z^w$ satisfying $0 \le e_j \le h$ and having the property that $e_j =0$ for at least one index $j$. For any fixed $j$, the number of choices for $\ba_j^{[\ell_j]} \in \Z^{\ell_j-1}$ having $1 \le \ba_j^{[\ell_j]} \le p^h$ and $(p^h, \ba_j^{[\ell_j]}) = p^{e_j}$ is at most $p^{(h-e_j)(\ell_j-1)}$. It follows that
    \begin{align*}
        \Xi(p^h, \be) \le \prod_{j=1}^w p^{(h-e_j)(\ell_j-1)},
    \end{align*}
	and hence
	\begin{align}\label{6.5}
		A_1^*(p^h)\le c_2p^{h \eps}\sum_{\substack{0\le \be \le h \\ e_1\cdots e_w=0}} \prod_{j=1}^w p^{e_j-h} \le c_2 wp^{h \eps-h}\Bigg(\sum_{e=0}^h p^{e-h} \Bigg)^{w-1}\le c_2 w2^w (p^h)^{-1+\eps}.
	\end{align}

	\par	On recalling \eqref{6.2}, \eqref{6.4} and 	\eqref{6.5} we find that
    \begin{align*}
       	\sum_{h=1}^{\infty}|A(p^h)|&\le c_1\biggl( -1+\sum_{h=0}^\infty p^{-h/(2kw)}A_1(p^h)\biggr)\\
		&\le c_1\biggl( -1+\calR_p\sum_{h=0}^\infty p^{-h/(2kw)}A_1^*(p^h) \biggr)\\
		&\le c_1(\calR_p-1)+w2^w c_1c_2\calR_p\sum_{h=1}^\infty p^{-h(1+1/(2kw)-\eps)}\ll 1.
    \end{align*}
	It follows that the $p$-adic density $\chi_p$ defined in (\ref{6.1}) exists. In particular, whenever $\calR_p=1$ we have
    \begin{align}\label{6.6}
		\sum_{h=1}^{\infty} |A(p^h)|\le c_3p^{-1-1/(3kw)}
	\end{align}
	for some positive number $c_3$ depending at most on the coefficient matrices $C^{(l)}$. On recalling the conclusion of Lemma~\ref{L5.2}, one sees that $\calR_p=1$ for all primes $p$ with $p\not\in \Ome (D)$, and thus
    \begin{align*}
        \prod_p \left(\sum_{h=0}^\infty |A(p^h)|\right)\ll \prod_p \left(1+p^{-1-1/(3kw)}\right)^{c_3}<\zeta(1+1/(3kw))^{c_3}.
    \end{align*}
	Hence, the singular series $\frS$ converges absolutely and one has $\frS=\prod_p\chi_p$.

	\par
    Furthermore, a standard argument yields
    \begin{align*}
        \chi_p = \lim_{h \to \infty} p^{-h(s-r)} M(p^h),
    \end{align*}
    where $M(q)$ denotes the number of solutions $\bx \in (\Z/q\Z)^s$ of the 
congruences
    \begin{align*}
        c_{j, 1}^{(l)}x_1^l + \ldots + c_{j, s}^{(l)}x_s^l \equiv 0 \pmod{q} \qquad 
(1 \le j \le r_l, \, 2 \le l \le k),
    \end{align*}
    corresponding to the equations \eqref{1.10}. Using again the observation that 
$\calR_p=1$ for all sufficiently large primes $p$, we discern from \eqref{6.6} that there exists an integer $p_0$ with the property that
    \begin{align*}
        1/2 \le \prod_{\substack{p > p_0 \\ p \text{ prime}}} \chi_p \le 3/2.
    \end{align*}
    For the remaining finite set of primes, a standard application of Hensel's lemma shows that $\chi_p>0$ whenever the system \eqref{1.10} possesses a non-singular solution in $\Q_p$. We thus conclude that under the hypotheses of the lemma we have $\frS \gg 1$ as claimed.
\end{proof}

We next demonstrate the existence of the limit
\begin{align*}
    \chi_{\infty} = \lim_{Q \to \infty} \frJ_1(Q).
\end{align*}
With this goal in mind, when $W$ is a positive real number, we introduce the auxiliary mean value
\begin{align*}
	\frJ_1^*(W)=\int_{[-W,W]^r}\prod_{j=1}^s |v_k(\btet_j;1)|\d\bbet .
\end{align*}

\begin{lemma}\label{L6.2}
    Under the hypotheses of Theorem~\ref{T4.1}, there is a positive number $\del$ for which one has $\frJ^*_1(2Q) - \frJ^*_1(Q) \ll Q^{-\del}$, and hence the limit $\chi_{\infty}$ exists. In particular, one has
	\begin{align*}
		\frJ_1(Q)=\chi_\infty+O(Q^{-\del}).
	\end{align*}
	Furthermore, if the system \eqref{1.10} has a non-singular solution inside the real unit cube $(-1,1)^s$, then the singular integral $\chi_{\infty}$ is positive.
\end{lemma}

\begin{proof}
    The first part of the proof is inspired by a singular series argument of Heath-Brown and 
Skorobogatov (see \cite[pages 173 and 174]{HBSk2002}). Recall that 
\[
s_0=t_0u_0+\ldots +t_nu_n,
\]
where the integers $t_j$ are defined by means of \eqref{3.15}. Thus, the hypotheses of 
Theorem~\ref{T4.1} imply that $s \ge 2s_0+1 \ge 2K+1$. Let $\calJ$ denote the set of 
$s_0$-element subsets $\{j_1, \dots, j_{s_0}\}$ of $\{1, \dots, s\}$. When $J \in \calJ$, 
define 
    \begin{align}\label{6.7}
		\scrS_J(Q) &= \sum_{1 \le q \le Q} \msum{q}{\ba} q^{-2s_0}\prod_{j \in J} 
|S_k(q, \bLam_{j})|^2 
	\end{align}
	and
	\[
		\scrJ_J(Q) = \int_{[-Q,Q]^r} \prod_{j \in J} |v_k(\btet_{j}; 1)|^2 \d \bbet.
	\]        
    Set $Y=Q^{6r}$, and define the major arcs $\frM_Y(Q)$ via \eqref{4.2}. By making 
the necessary modifications to our initial analysis of the major arcs, we see from 
\eqref{4.8} that for any $J \in \calJ$ one has
    \begin{align}\label{6.8}
        \int_{\frM_Y(Q)} \prod_{j \in J} |g_k(\bgam_{j}; Y)|^2 \d \balp = Y^{2s_0-K} 
\scrS_J(Q) \scrJ_J(Q) + O(Y^{2s_0-K}Q^{-1}).
    \end{align}
    Note that we have $S_k(1, \boldsymbol{1})=1$ for the term corresponding to $q=1$ 
in \eqref{6.7}. Since all other summands are non-negative, it follows that for any 
$Q\ge 1$ and any $J\in \calJ$, one has
    \begin{align}\label{6.9}
        \scrS_J(Q)  \ge 1.
    \end{align}
	On the other hand, for $Y=Q^{6r}$ the major arcs $\frM_Y(Q)$ are disjoint, and we 
conclude from Theorem~\ref{T3.3} that under the hypotheses of Theorem~\ref{T4.1} we 
have
    \[
        \int_{\frM_Y(Q)} \prod_{j \in J}|g_k(\bgam_{j}; Y)|^2 \d \balp \ll 
I_{s_0,\bk}^{\bv, \bu}(Y) \ll Y^{2s_0-K+\eps}.
    \]
    In combination with \eqref{6.8} and \eqref{6.9} it follows that 
    \begin{align}\label{6.10}
        \max_{J\in \calJ}\scrJ_J(Q) \ll Y^{\eps}.
    \end{align}

	\par Since $Y$ is a power of $Q$, we discern from \eqref{4.4} and \eqref{6.10} via 
\eqref{2.3} that for any $Q>1$ we have
    \begin{align*}
        |\frJ^*_1(2Q) - \frJ^*_1(Q)| 
        &\ll \left(\sup_{|\bbet|>Q} \prod_{j=1}^s |v_k(\btet_j;1)|\right)^{1-2s_0/s}  
\int_{[-2Q,2Q]^r} \prod_{j=1}^s |v_k(\btet_j;1)|^{2s_0/s} \d \bbet\\
        &\ll  Q^{-1/(ks)} \max_{J \in \calJ} \scrJ_J(2Q) \ll Q^{-1/(ks)+\eps}.
    \end{align*}
    Here, we exploited the fact that, since the coefficient matrices $C^{(l)}$ are highly 
non-singular, the condition $|\bbet|>Q$ implies that $|\btet_j| \gg Q$ for some index $j$ 
with $1\le j\le s$. This implies the first statement of the lemma. In particular, the singular 
integral $\chi_\infty$ converges absolutely.\par

    In order to establish the second claim, we follow an argument of Schmidt 
\cite{Sch1982}. When $T \ge 1$, define
    \begin{align*}
        w_T(y) = \begin{cases} T(1-T|y|), & \text{when } |y| \le T^{-1},\\ 0, & 
\text{otherwise}, \end{cases}
    \end{align*}
    and recall that
    \begin{align}\label{6.11}
        w_T(y) = \int_{-\infty}^{\infty}  e(\bet y) 
\left( \frac{\sin(\pi \bet/T)}{\pi \bet/T}\right)^2 \d \bet,
    \end{align}
    where the integral converges absolutely. Set
    \begin{align*}
        \Phi_j^{(l)}(\bx) = c_{j,1}^{(l)} x_1^l + \ldots + c_{j,s}^{(l)} x_s^l \qquad 
(1 \le j \le r_l, \, 2 \le l \le k),
    \end{align*}
    and put
    \begin{align*}
        W_T = \int_{[-1,1]^s} \prod_{l=2}^k \prod_{j=1}^{r_l} w_T(\Phi_j^{(l)}(\bz))
\d \bz.
    \end{align*}
    We adapt the argument of \S11 in Schmidt's work \cite{Sch1982} to show that 
$W_T \to \chi_{\infty}$ as $T \to \infty$.\par

	Set
    \begin{align*}
        \psi_T(\bbet) = \prod_{l=2}^k \prod_{j=1}^{r_l} 
\left( \frac{\sin(\pi \bet_j^{(l)}/T)}{\pi \bet_j^{(l)} /T}\right)^2.
    \end{align*}
    Then in light of \eqref{6.11} a change of the order of integration shows that
    \begin{align*}
        W_T = \int_{\R^r} \left(\prod_{i=1}^s v_k(\btet_i; 1)\right) \psi_T(\bbet) \d \bbet,
    \end{align*}
    and hence
    \begin{align}\label{6.12}
        W_T - \chi_{\infty} = \int_{\R^r} \left(\prod_{i=1}^s v_k(\btet_i; 1)\right) 
(\psi_T(\bbet) - 1) \d \bbet.
    \end{align}
    In order to analyse the integral on the right hand side of \eqref{6.12}, it is convenient 
to consider two domains separately. Write $U_1=[-\sqrt T, \sqrt T]^r$, and set 
$U_2 = \R^r \setminus U_1$.
    From the power series expansion of $\psi_T$ we find that
    \begin{align*}
        0\le 1-\psi_T(\bbet) \ll\min\left\{ 1, \sum_{l=2}^k \sum_{j=1}^{r_l}
(|\bet_j^{(l)}|/T)^2\right\},
    \end{align*}
    whence we discern that the domain  $U_1$ contributes at most
    \begin{align*}
        \sup_{\bbet \in U_1}|1 - \psi_T(\bbet)|\int_{\R^r} \prod_{i=1}^s |v_k(\btet_i; 1) | 
\d \bbet \ll T^{-1}.
    \end{align*}
    Note that in the last step we used our previous insight that the singular integral 
converges absolutely. Meanwhile, the contribution from $U_2$ is bounded above by
    \begin{align*}
        \sum_{i=1}^{\infty} |\frJ^*_1(2^i \sqrt T) - \frJ^*_1(2^{i-1} \sqrt T)| \ll 
\sum_{i=1}^{\infty}(2^i \sqrt T)^{-\del} \ll T^{-\del/2},
    \end{align*}
    for some positive number $\del$  with $\del<1$, where again we took advantage of 
our earlier findings. Thus we infer from \eqref{6.12} that
	\begin{equation}\label{6.13}
		|W_T - \chi_{\infty}| \ll T^{-\del/2}
	\end{equation}
	for all $T \ge 1$, and hence $W_T$ does indeed converge to $\chi_{\infty}$, as 
claimed. 
	
	\par
    Suppose now that the system \eqref{1.10} has a non-singular solution inside 
$(-1,1)^s$. Then it follows from the implicit function theorem that the real manifold 
described by the equations in \eqref{1.10} has positive $(s-r)$-dimensional volume inside 
$(-1,1)^s$. In such circumstances, Lemma~2 of Schmidt \cite{Sch1982} shows that 
$W_T \gg 1$ uniformly in $T$. We therefore deduce from (\ref{6.13}) that $\chi_\infty$ is 
indeed positive, confirming the second claim of the lemma.
\end{proof}

Upon combining \eqref{4.11} with Lemmata \ref{L6.1} and \ref{L6.2}, we 
conclude that
\begin{align*}
	N_{s, \bk}^{\bv, \bu}(X) &=X^{s-K}\left(\frS+O(Q^{-\del})\right) 
\left(\chi_\infty+O(Q^{-\del})\right)+O(X^{s-K-\ome})\\
	&=(\calC+o(1))X^{s-K},
\end{align*}
where $\calC = \chi_{\infty} \prod_p \chi_p$. Moreover, the constant $\calC$ is positive 
whenever the system \eqref{1.10} possesses non-singular solutions in all local fields. This 
confirms the asymptotic formula \eqref{1.11}, and completes our proof of 
Theorem~\ref{T4.1}.

\bibliographystyle{amsbracket}
\providecommand{\bysame}{\leavevmode\hbox to3em{\hrulefill}\thinspace}

\end{document}